\newtheorem{theorem}{Theorem}[section]
\newtheorem{lemma}[theorem]{Lemma}
\newtheorem{proposition}[theorem]{Proposition}
\newtheorem{remark}[theorem]{Remark}
\theoremstyle{definition}
\theoremstyle{remark}
\newtheorem*{note*}{Note}
\numberwithin{equation}{section}
\newcommand{\rank}{\mathop{\operator@font rank}}
\newcommand{\conv}{\mathop{\operator@font conv}}
\newcommand{\vol}{\mathop{\operator@font vol}}
\newcommand{\onetagright}{\tagsleft@false}
\newcommand{\ls}{\leqslant}
\newcommand{\gr}{\geqslant}
\renewcommand{\epsilon}{\varepsilon}
\def\irr#1{{\rm Irr}(#1)}
\def\irrr#1#2 {\irr {#1 \mid #2}}
\begin{document}
\small

\title{\bf Moments of the Cram\'{e}r transform of log-concave probability measures}

\medskip

\author{Apostolos Giannopoulos and Natalia Tziotziou}

\date{}
\maketitle

\begin{abstract}\footnotesize Let $\mu$ be a centered log-concave probability measure on $\mathbb{R}^n$, and let $\Lambda_\mu^\ast$ denote its Cram\'er transform. We prove that
$\mathbb{E}_{\mu}\left[\exp\left(\frac{c_1}{n}\,\Lambda_{\mu}^{\ast}\right)\right] < \infty$ 
where $c_1>0$ is an absolute constant. In, particular, $\Lambda_{\mu}^{\ast}$ has finite moments of all orders. 
Our approach is based on a comparison of convex bodies naturally associated with $\mu$, and yields the bound
$\|\Lambda_\mu^\ast\|_{L^2(\mu)} \ls c_2\, n \ln n.$
The example of the uniform measure on the Euclidean ball shows that this growth in the dimension is optimal.  As an application, we obtain optimal dimension-dependent 
thresholds for the expected measure of random polytopes generated by independent log-concave samples.
\end{abstract}

\section{Introduction}

In this work we study a question that was left open in \cite{BGP-threshold} and concerns the existence of moments of
the Cram\'{e}r transform of log-concave probability measures. Let $\mu$ be a log-concave probability measure on ${\mathbb R}^n$.
The log-Laplace transform of $\mu$ is defined by
\begin{equation*}\Lambda_{\mu }(\xi )=\ln\left(\int_{{\mathbb R}^n}e^{\langle\xi
,z\rangle }d\mu(z)\right).\end{equation*}
Note that $\Lambda_{\mu}(0)=0$ and that $\Lambda_{\mu }$ is convex by H\"{o}lder's inequality. If we also
assume that the barycenter ${\rm bar}(\mu )$ of $\mu$ is at the origin then from Jensen's inequality
we see that $\Lambda_{\mu }(\xi )\gr 0$ for all $\xi $. One can also check that the set $A(\mu )=\{\Lambda_{\mu }<\infty\}$
is open and $\Lambda_{\mu }$ is $C^{\infty }$ and strictly convex on $A(\mu )$.
The Cram\'{e}r transform $\Lambda_{\mu}^{\ast }$ of $\mu $ is the Legendre transform of $\Lambda_{\mu}$, defined by
\begin{equation*}\Lambda_{\mu }^{\ast }(x)= \sup_{\xi\in {\mathbb R}^n} \left\{ \langle x, \xi\rangle - \Lambda_{\mu }(\xi )\right\},\end{equation*}
and plays a key role in the theory of large deviations (see \cite{Dembo-Zeitouni-book}).

Brazitikos, Pafis and the first named author develop in \cite{BGP-threshold} an approach to the ``threshold
problem" for the expected measure of random polytopes whose vertices have an arbitrary log-concave distribution.
To make this question precise, consider the random polytope $K_N={\rm conv}\{X_1,\ldots ,X_N\}$, where $N>n$ and
$X_1,X_2,\ldots $ are independent random vectors distributed according to $\mu$. Given $\delta\in\left(0,\frac{1}{2}\right)$, let
$$\varrho_1(\mu,\delta)=\sup\{r>0:\mathbb{E}_{\mu^N}[\mu(K_N)]\ls\delta\;\hbox{for all}\; N\ls e^r\}$$
and
$$\varrho_2(\mu,\delta)=\inf\{r>0:\mathbb{E}_{\mu^N}[\mu(K_N)]\gr 1-\delta\;\hbox{for all}\; N\gr e^r\},$$
where $\mu^N$ is the product measure $\mu\times\cdots\times\mu$ ($N$ times). Let also $\varrho(\mu,\delta)=\varrho_2(\mu,\delta)-\varrho_1(\mu,\delta)$. We say that $\mu$ exhibits a threshold around
$\tau>0$ for the expected measure ${\mathbb E}_{\mu^N}[\mu (K_N)]$ of $K_N$ if $\varrho_1(\mu,\delta)\ls \tau\ls\varrho_2(\mu,\delta)$
for all $\delta\in\left(0,\frac{1}{2}\right)$ and the ``window" $\varrho(\mu,\delta)$ is ``small" when compared
with $\tau$. The approach of \cite{BGP-threshold} establishes, under some conditions, a sharp threshold
with $\tau ={\mathbb E}_{\mu }(\Lambda_{\mu}^{\ast })$. One should of course show that
${\mathbb E}_{\mu }(\Lambda_{\mu}^{\ast })$ is finite, and in order to obtain a good bound for the window
of the threshold it is necessary to prove that the second moment of $\Lambda_{\mu}^{\ast }$ is finite and in fact
that the parameter $\beta(\mu)={\rm Var}_{\mu }(\Lambda_{\mu}^{\ast })/({\mathbb E}_{\mu }(\Lambda_{\mu }^{\ast }))^2$ is small
as the dimension $n$ increases to infinity, ideally that $\beta(\mu)=o_n(1)$ independently from $\mu$. More precisely,
combining the results of \cite{BGP-threshold} with subsequent estimates from \cite{Brazitikos-Chasapis-2024} we know that
if $\beta(\mu)\ls c_0$ then
$$\frac{\rho(\mu,\delta)}{\tau}\ls c_1\sqrt{\beta(\mu)/\delta}$$
for every log-concave probability measure $\mu$ on $\mathbb{R}^n$ and any $0<\delta<1/2$, where
$\tau=\mathbb{E}_{\mu}(\Lambda_{\mu}^{\ast})$ and $c_0,c_1>0$ are absolute constants.

It was proved in \cite{BGP-threshold} that if $K$ is a centered convex body of volume $1$ in ${\mathbb R}^n$
and $\mu$ is a centered $\kappa$-concave probability measure, where $\kappa\in (0,1/n]$, with ${\rm supp}(\mu)=K$
then ${\mathbb E}_{\mu}\big[\exp(\kappa \Lambda_{\mu}^{\ast }(x)/2)\big]<\infty $. In particular, this is true
for the Lebesgue measure $\mu_K$ on $K$ (with $\kappa =1/n$) and implies that $\Lambda_{\mu_K}^{\ast}$ has finite moments
of all orders. We obtain a similar integrability result in the broader setting of
log-concave probability measures.

\begin{theorem}\label{th:moments}For every centered log-concave probability measure $\mu$ on ${\mathbb R}^n$
we have that
\begin{equation}\label{eq:main-moments-intro}\int_{{\mathbb R}^n}\exp\left(\frac{c}{n}\Lambda_{\mu}^{\ast }(x)\right)\,d\mu(x)<\infty \end{equation}
where $c>0$ is an absolute constant. In particular, $\Lambda_{\mu}^{\ast}$ has finite moments of all orders.
\end{theorem}

We establish~\eqref{eq:main-moments-intro} with the constant $c=\frac{1}{16}$. A careful inspection of the proof of
Theorem~\ref{th:moments} shows that this constant can be improved, for instance, to any value
$c<\frac{1}{8}$. Moreover, we obtain the estimate
$$\int_{\mathbb{R}^n} \exp\!\left(\frac{\Lambda_\mu^{\ast}(x)}{16n}\right)\, d\mu(x)
\ls 2\,\exp\!\left(\frac{n\ln n}{16}\right).$$
A straightforward application of H\"older's inequality then yields the upper bound
$$\|\Lambda_\mu^{\ast}\|_{L^{\psi_1}(\mu)} \le C\, n^2 \ln n$$
for the Orlicz $\psi_1$-norm of $\Lambda_\mu^{\ast}$, corresponding to $\psi_1(t)=e^t-1$.

Theorem~\ref{th:moments} shows that $\|\Lambda_{\mu}^{\ast}\|_{L^2(\mu)}<+\infty$, and hence both $\mathbb{E}_{\mu}(\Lambda_{\mu}^{\ast})$
and $\beta(\mu)$ are finite for any centered log-concave probability measure $\mu$ on $\mathbb{R}^n$. In fact, we can
give the following upper bound for $\|\Lambda_{\mu}^{\ast}\|_{L^2(\mu)}$, which is optimal as one can check from the example
of the uniform measure on the Euclidean ball.

\begin{theorem}\label{th:small-moments}For every centered log-concave probability measure $\mu$ on ${\mathbb R}^n$
we have that
$$\|\Lambda_{\mu}^{\ast}\|_{L^2(\mu)}\ls cn\ln n$$
where $c>0$ is an absolute constant.
\end{theorem}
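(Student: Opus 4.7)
The plan is to deduce the $L^{2}$ bound from a quantitative version of Theorem~\ref{th:moments}. The qualitative statement $\int e^{c\Lambda_{\mu}^{\ast}/n}\,d\mu<\infty$ alone is not sufficient, since the right hand side may in principle depend on $\mu$ and on $n$. The first and substantive step is therefore to inspect the proof of Theorem~\ref{th:moments} -- which, according to the authors' description, is based on comparing certain families of convex bodies associated with $\mu$ -- and to extract from it an explicit estimate of the form
\[
\int_{\mathbb{R}^{n}}\exp\!\left(\frac{c}{n}\Lambda_{\mu}^{\ast}(x)\right)d\mu(x)\ls n^{\alpha},
\]
valid for absolute constants $c,\alpha>0$ that do not depend on $\mu$. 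I expect the comparison of convex bodies to deliver exactly such polynomial-in-$n$ control.

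Given this quantitative integrability, the $L^{2}$ bound follows from routine tail integration. Markov's inequality applied to the exponential moment gives
\[
\mu\!\left(\{x:\Lambda_{\mu}^{\ast}(x)>t\}\right)\ls n^{\alpha}\,e^{-ct/n},
\]
which is informative only for $t$ above the threshold $t_{0}:=(\alpha/c)\,n\ln n$. Writing
\[
\|\Lambda_{\mu}^{\ast}\|_{L^{2}(\mu)}^{2}=\int_{0}^{\infty}2t\,\mu\!\left(\{\Lambda_{\mu}^{\ast}>t\}\right)dt
\]
and splitting at $t_{0}$, the contribution of $[0,t_{0}]$ is at most $t_{0}^{2}=O(n^{2}\ln^{2}n)$ (using the trivial bound $\mu\ls 1$), while the contribution of $[t_{0},\infty)$, after the change of variable $u=ct/n$, is bounded by
\[
\frac{2\,n^{2+\alpha}}{c^{2}}\int_{\alpha\ln n}^{\infty}ue^{-u}\,du=\frac{2n^{2}}{c^{2}}(\alpha\ln n+1)=O(n^{2}\ln n).
\]
Taking square roots yields $\|\Lambda_{\mu}^{\ast}\|_{L^{2}(\mu)}\ls cn\ln n$, with the $\ln n$ factor arising from the regime below $t_{0}$ in which the Markov tail bound is trivial.

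The main obstacle is clearly the first step. One cannot hope to make the exponential moment uniformly bounded in $n$: a bound $\int e^{c\Lambda_{\mu}^{\ast}/n}\,d\mu=O(1)$ would yield the stronger estimate $\|\Lambda_{\mu}^{\ast}\|_{L^{2}}=O(n)$, which contradicts the $n\ln n$ optimality exhibited by the uniform measure on the Euclidean ball and mentioned in the abstract. So one must aim for precisely polynomial growth in $n$ -- no better, no worse -- and verify that the convex body comparison underlying Theorem~\ref{th:moments} does deliver a bound $n^{\alpha}$ with $\alpha$ an absolute constant. Once this is done, the distributional integration in the previous paragraph is essentially automatic.
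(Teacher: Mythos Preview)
Your plan has a genuine gap at the first step. The proof of Theorem~\ref{th:moments} does \emph{not} deliver the polynomial bound $\int e^{c\Lambda_{\mu}^{\ast}/n}\,d\mu\ls n^{\alpha}$ with an absolute $\alpha$. What that proof establishes is the tail estimate $\mu(\{\Lambda_{\mu}^{\ast}>nt\})\ls e^{-t/8}$ for $t\gr t_n:=n\ln n$; plugging this into the layer-cake formula for the exponential moment gives a bound of order $e^{t_n/16}=n^{n/16}$, i.e.\ $\alpha\approx n/16$ rather than an absolute constant. Equivalently, the exponential tail $\mu(\{\Lambda_{\mu}^{\ast}>s\})\ls e^{-s/(8n)}$ is only available for $s\gr n^2\ln n$, so your threshold $t_0$ is forced to be of order $n^2\ln n$, and the trivial bound on $[0,t_0]$ already yields $\|\Lambda_{\mu}^{\ast}\|_{L^2(\mu)}\ls Cn^2\ln n$, which misses the target by a factor of $n$. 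There is no way to repair this by adjusting $c$: a bound $\mu(\{\Lambda_{\mu}^{\ast}>t\})\ls n^{\alpha}e^{-ct/n}$ with absolute $\alpha,c$ would have to cover the whole range $t<n^2\ln n$ with the prefactor $n^{\alpha}$ alone, forcing $\alpha\gr cn$.

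The paper does not pass through the exponential moment at all. It returns to the key comparison lemma $(1-\delta)R_t(\mu)\subseteq B_{g(t,\delta)}(\mu)$ with $g(t,\delta)\ls 2t+n\ln(1/\delta)$ and makes a \emph{different} choice of the parameter: for Theorem~\ref{th:moments} one takes $\delta=e^{-t/2}$, which costs a factor $n$ in $g$ (hence the $B_{nt}$ and the $n^2\ln n$ threshold above), whereas for the $L^2$ bound one takes $\delta=t^{-4}$. Since $n\ln t\ls 2t$ for $t\gr n\ln n$, this polynomial choice keeps $g(t,\delta)\ls 10t$ and yields the \emph{polynomial} tail $\mu(\{\Lambda_{\mu}^{\ast}>10t\})\ls 1/t^{3}$ already from $t\gr n\ln n$. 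Integrating $2t\cdot t^{-3}$ from $t_n$ then gives $\|\Lambda_{\mu}^{\ast}\|_{L^2(\mu)}\ls Cn\ln n$. So the essential new idea is not a sharper bookkeeping of Theorem~\ref{th:moments} but a second application of the same inclusion with the parameter tuned to produce a polynomial rather than exponential tail at the correct threshold.
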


The proof of Theorem~\ref{th:moments} and Theorem~\ref{th:small-moments} is presented in Section~\ref{section-3}.
It is based on a number of intermediate results regarding certain families of convex bodies that are naturally associated
with a given centered log-concave probability measure $\mu$ on $\mathbb{R}^n$. First of all, it is natural to consider
the family $\{B_t(\mu)\}_{t>0}$ of level sets of $\Lambda_{\mu}^{\ast}$. For any $t>0$ we define
$B_t(\mu)=\{x\in\mathbb{R}^n:\Lambda_{\mu}^{\ast}(x)\ls t\}$. Then, the starting point for the
proof of, say, Theorem~\ref{th:small-moments} is the identity
$$\int_{{\mathbb R}^n}|\Lambda_{\mu}^{\ast}(x)|^2d\mu(x)=\int_0^{\infty}2t\,\mu(\{x:\Lambda_{\mu}^{\ast}(x)>t\})\,dt
=\int_0^{\infty}2t\,(1-\mu(B_t(\mu)))\,dt.$$
Therefore, we would like to know how fast $\mu(B_t)$ tends to $1$ as $t\to\infty$. We consider a second
family of convex bodies associated with $\mu$. For any $t>0$ we define
$R_t(\mu )=\{x\in {\mathbb R}^n:f_{\mu }(x)\gr e^{-t}f_{\mu }(0)\}$,
where $f_{\mu}$ is the density of $\mu$. An estimate for the measure of $R_t(\mu)$ is essentially
contained in Klartag's \cite[Lemma~5.2]{Klartag-2007clt}. His arguments lead to the estimate
$$\mu(R_t(\mu ))\gr 1-e^{-t/4}$$
for $t$ large enough with respect to $n$ (see Proposition~\ref{prop:r-2}). We compare $R_t(\mu)$
and $B_t(\mu)$ in Lemma~\ref{lem:level-B} where we show that for any $\delta\in (0,1)$ and $t\gr t_n:=n\ln n$ we have that
\begin{equation}\label{eq:intro-R-B}(1-\delta)R_t(\mu)\subseteq B_{g(t,\delta)}(\mu)\end{equation}
where $g(t,\delta)\ls 2t+n\ln(1/\delta)$. We combine the above with the useful observation (see Lemma~\ref{lem:2}) that
$$\mu((1+\delta)A)\ls e^{2n\delta }\mu(A)$$
for any $\delta>0$ and any Borel subset $A$ of $\mathbb{R}^n$. Choosing $\delta =t^{-4}$ we get that
$$\mu(B_{10t}(\mu))\gr 1-1/t^3$$
for all $t\gr t_n$, and Theorem~\ref{th:small-moments} follows. For the proof of Theorem~\ref{th:moments} we
apply a similar reasoning, this time using \eqref{eq:intro-R-B} with $\delta =\exp(-ct)$.

Besides the families $\{B_t(\mu)\}_{t>0}$ and $\{R_t(\mu)\}_{t>0}$ we discuss and compare several other interesting
families of convex bodies associated with $\mu$. These include the family $\{K_t(\mu)\}_{t>0}$ of K.~Ball's bodies, the family
$\{Z_t^+(\mu)\}_{t>0}$ of one-sided $L_t$-centroid bodies of $\mu$, and the family $\{T_t(\mu)\}_{t>0}$ of ``floating
bodies" of $\mu$, defined by $T_t(\mu)=\{x\in \mathbb{R}^n:\varphi_{\mu}(x)\gr e^{-t}\}$,
where $\varphi_{\mu }(x)=\inf\{\mu (H):H\;\hbox{is a closed half-space containing}\;x\}$
is Tukey's half-space depth function of $\mu$. This is carried out in Section~\ref{section-2}, where we introduce all of these
bodies and establish inclusion relations among them. Some relations  between specific pairs of these families have already been obtained in the
literature. For example, Paouris and Werner \cite[Theorem~2.2]{Paouris-Werner-2012} compare $T_t(\mu)$ with $Z_t(\mu)$, while 
\L ata\l a and Wojtaszczyk \cite[Proposition~3.2 and~3.5]{Latala-Wojtaszczyk-2008} compare $Z_t(\mu)$ with $B_t(\mu)$. In the
dual setting, Klartag and E.~Milman show in \cite[Lemma~2.3]{Klartag-EMilman-2012} that if $\Lambda_t(\mu)=\{\Lambda_{\mu}\ls t\}\cap (-\{\Lambda_{\mu}\ls t\})$ then
$\Lambda_t(\mu)\approx t(Z_t(\mu))^{\circ}$. However, sharp inclusions between pairs of these families, which would be useful for obtaining sharp
estimates on the rate at which the measure of these bodies converges to $1$ as $t\to\infty$, do not seem to appear
in the literature. We feel that a systematic and comprehensive collection of such results would also be useful
in other contexts as well.

We illustrate this point of view in Section~\ref{section-4} where we provide further applications of our
approach. Our first result concerns the question to obtain uniform upper and lower thresholds
for the expected measure of a random polytope defined as the convex hull of independent random points with a log-concave distribution. The general formulation of the problem is the following. Given a log-concave probability measure $\mu $ on ${\mathbb R}^n$
and the random polytope $K_N={\rm conv}\{X_1,\ldots ,X_N\}$ as above, our aim is to find a constant $N_1(n)$, depending only on $n$
and as large as possible, so that
$$\sup_{\mu }\Big(\sup\Big\{{\mathbb E}_{\mu^N}[\mu (K_N)]:N\ls N_1(n)\Big\}\Big)\longrightarrow 0$$ as $n\to\infty $, 
and a second constant $N_2(n)$, depending only on $n$ and as small as possible, so that
$$\inf_{\mu}\Big(\inf\Big\{{\mathbb E}_{\mu^N}[\mu (K_N)]:N\gr N_2(n)\Big\}\Big)\longrightarrow 1$$ as $n\to\infty $,
where the supremum and the infimum are over all log-concave probability measures. We shall call the first type of result a ``uniform upper threshold" and the second type a ``uniform lower threshold". Results of this form were obtained
by Chakraborti, Tkocz and Vritsiou in \cite{Chakraborti-Tkocz-Vritsiou-2021} for some
general families of distributions and by Brazitikos, Pafis and the first named author in \cite{BGP-depth} for the
class of all log-concave probability measures. An asymptotically optimal uniform upper threshold has been established
with $N_1(n)=\exp(cn)$, but the available uniform lower threshold $N_2(n)=\exp (C(n\ln n)^2u(n))$
where $u(n)$ is any function with $u(n)\to\infty $ as $n\to\infty $ is not optimal (see Section~\ref{section-4} for
a more detailed description of earlier works on this topic). Here we prove the next theorem.

\begin{theorem}\label{th:rough}There exists an absolute constant $C>0$ such that
$$\inf_{\mu }\Big(\inf\Big\{ {\mathbb E}_{\mu^N}\big[\mu (K_N)\big]: N\gr \exp (Cn\ln n)\Big\}\Big)\longrightarrow 1$$
as $n\to\infty $, where the first infimum is over all log-concave probability measures $\mu $ on ${\mathbb R}^n$.
\end{theorem}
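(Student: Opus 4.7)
The plan is to combine the integrability result for $\Lambda_\mu^{\ast}$ encapsulated in Theorem~\ref{th:small-moments} (more precisely, the level set estimate $\mu(B_{10t}(\mu))\ge 1-1/t^3$ for $t\ge n\ln n$ used in its proof) with a classical covering-probability estimate for random polytopes. Starting from the identity
\[
1-\mathbb{E}_{\mu^N}[\mu(K_N)]=\int_{\mathbb{R}^n}\mathbb{P}_{\mu^N}[x\notin K_N]\,d\mu(x),
\]
I would split the integral according to whether $x\in B_t(\mu)$ or not, for a threshold $t=C_0 n\ln n$ with $C_0$ an absolute constant to be chosen. Outside $B_t(\mu)$ the level set estimate yields $\mu(\mathbb{R}^n\setminus B_t(\mu))=o(1)$ as $n\to\infty$, uniformly in $\mu$; inside $B_t(\mu)$ the task is to bound $\mathbb{P}[x\notin K_N]$ uniformly.

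For the pointwise bound I would use the Dyer--F\"uredi--McDiarmid-type estimate
\[
\mathbb{P}_{\mu^N}[x\notin K_N]\le \binom{N}{n}\bigl(1-\varphi_\mu(x)\bigr)^{N-n},
\]
where $\varphi_\mu$ is the Tukey half-space depth of $\mu$, together with a quantitative comparison between the level sets of $\Lambda_\mu^{\ast}$ and the floating bodies $T_t(\mu)=\{\varphi_\mu\ge e^{-t}\}$. The easy direction $T_t(\mu)\subseteq B_t(\mu)$ is a Chernoff bound, and the reverse direction, of the form $B_t(\mu)\subseteq T_{C_1 t+C_2 n}(\mu)$ for absolute constants $C_1,C_2$, should fall out of the family of inclusions between $R_t$, $B_t$, $T_t$, $K_t$ and $Z_t^+$ developed in Section~\ref{section-2}; it supplies the pointwise lower bound $\varphi_\mu(x)\ge \exp(-C_1 t-C_2 n)$ on $B_t(\mu)$. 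Substituting into the Dyer--F\"uredi--McDiarmid bound and using $N\ge\exp(Cn\ln n)$ with $C$ large enough relative to $C_0,C_1,C_2$, one checks that
\[
\binom{N}{n}\bigl(1-e^{-C_1 t-C_2 n}\bigr)^{N-n}\le \exp\bigl(n\ln N - cNe^{-C_1 t-C_2 n}\bigr)
\]
is doubly exponentially small in $n$ for every $x\in B_t(\mu)$.

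Combining the two estimates yields the theorem: both contributions to the split integral tend to $0$ uniformly over all centered log-concave probability measures $\mu$ as $n\to\infty$, and a preliminary translation by the barycenter reduces the general log-concave setting to the centered one. The main obstacle, and the place where the improvement of the dimensional dependence from the previously known $(n\ln n)^2 u(n)$ to $n\ln n$ actually enters, is the quantitative geometric inclusion $B_t(\mu)\subseteq T_{C_1 t+C_2 n}(\mu)$ for centered log-concave $\mu$: combined with the $L^2$ bound $\|\Lambda_\mu^{\ast}\|_{L^2(\mu)}\ls cn\ln n$ of Theorem~\ref{th:small-moments}, this inclusion is what compresses the threshold $N_2(n)$ down to $\exp(Cn\ln n)$.
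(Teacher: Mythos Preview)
Your proposal is correct and follows essentially the same route as the paper's proof: both combine the level-set estimate $\mu(B_{10t}(\mu))>1-1/t^{3}$ (from the proof of Theorem~\ref{th:small-moments}), the inclusion $B_s(\mu)\subseteq T_{s+3\ln s}(\mu)$ of Proposition~\ref{prop:floating-1} (your $B_t\subseteq T_{C_1t+C_2n}$), and the Dyer--F\"uredi--McDiarmid covering bound. The only cosmetic difference is packaging: you integrate the pointwise estimate $\mathbb{P}_{\mu^N}[x\notin K_N]$ over $B_t(\mu)$, whereas the paper uses the set form $\mathbb{E}_{\mu^N}[\mu(K_N)]\gr\mu(A)\,\mu^N(K_N\supseteq A)$ with $A=T_{10t+3\ln(10t)}(\mu)$.
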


The uniform lower threshold of Theorem~\ref{th:rough} is of the right order. An exponential in the dimension lower threshold is not possible in full generality; actually, in the case where $X_i$ are uniformly distributed in the Euclidean ball one can check
that $N\gr\exp (cn\ln n)$ points are needed so that the volume of a random $K_N$ will be significantly large.

The second result of Section~\ref{section-4} concerns the distribution of the half-space depth function $\varphi_{\mu}$.
Our starting point is the estimate
$$\exp(-c_1n)\ls {\mathbb E}_{\mu }(\varphi_{\mu }) \ls\exp(-c_2n)$$
from \cite{BGP-depth} which holds true for every log-concave probability measure $\mu$ on ${\mathbb R}^n$.
We consider negative moments of $\varphi_{\mu}$ and ask for the values of $p>0$ for which ${\mathbb E}_{\mu }(\varphi_{\mu }^{-p})<\infty$. Brazitikos and Chasapis have shown in \cite{Brazitikos-Chasapis-2024} that in the $1$-dimensional case one has
${\mathbb E}(\varphi_{\mu}^{-p})\ls 2^p/(1-p)$ for all $0<p<1$ and any probability measure $\mu$ on $\mathbb{R}$.
On the other hand, simple examples show that ${\mathbb E}_{\mu }(\varphi_{\mu }^{-p})$ may be infinite when $p\gr 1$.
We obtain the following general result on the range of values of $p>0$ for which
${\mathbb E}_{\mu }(\varphi_{\mu }^{-p})$ is finite.

\begin{theorem}\label{th:negative-phi}There exists an absolute constant $c>0$ such that
$$J_{\mu}(p):=\int_{{\mathbb R}^n}\frac{1}{\varphi_{\mu}^p(x)}\,d\mu (x)<\infty$$
for every log-concave probability measure $\mu$ on $\mathbb{R}^n$ and any $0<p\ls c/n$.
\end{theorem}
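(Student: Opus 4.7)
The plan is to reduce the problem to a tail bound on $\mu(T_t(\mu)^c)$ and then invoke Theorem~\ref{th:moments} through a pointwise comparison between $\varphi_\mu$ and $e^{-\Lambda_\mu^*}$. Writing $Y(x)=-\ln\varphi_\mu(x)\gr 0$, the layer-cake formula (integration by parts) gives
\begin{equation*}
J_\mu(p)=\int_{\R^n} e^{pY}\,d\mu=1+p\int_0^\infty e^{pt}\bigl(1-\mu(T_t(\mu))\bigr)\,dt,
\end{equation*}
so it suffices to produce a bound of the form $1-\mu(T_t(\mu))\ls Ce^{-ct/n}$ and then take $p$ a small multiple of $1/n$.

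The trivial comparison $\varphi_\mu(x)\ls e^{-\Lambda_\mu^*(x)}$ is immediate: for any $\xi$ the half-space $H_\xi=\{y:\langle y,\xi\rangle\gr\langle x,\xi\rangle\}$ contains $x$, and Markov's inequality gives $\mu(H_\xi)\ls e^{-(\langle x,\xi\rangle-\Lambda_\mu(\xi))}$, so optimizing in $\xi$ yields the bound. What we actually need is the reverse pointwise inequality
\begin{equation*}
\varphi_\mu(x)\gr \alpha\exp\bigl(-\beta\Lambda_\mu^*(x)-\gamma n\bigr)
\end{equation*}
with absolute constants $\alpha,\beta,\gamma>0$, equivalently an inclusion $B_t(\mu)\subseteq T_{\beta t+\gamma n}(\mu)$ (up to dilation by $\alpha$). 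I expect such a comparison to be derivable from the chain of inclusions among $B_t$, $R_t$, $K_t$, $Z_t^+$, and $T_t$ assembled in Section~\ref{section-2}.

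Once this pointwise inequality is available, the conclusion is essentially one line:
\begin{equation*}
J_\mu(p)=\int\varphi_\mu^{-p}\,d\mu\ls \alpha^{-p}e^{p\gamma n}\int e^{p\beta\Lambda_\mu^*}\,d\mu,
\end{equation*}
and Theorem~\ref{th:moments} bounds the last integral by an absolute constant provided $p\beta\ls c/n$, i.e.\ for $p\ls c/(\beta n)$. In the same range, the factor $e^{p\gamma n}\ls e^{c\gamma/\beta}$ is also bounded, and the claim follows.

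The main obstacle is therefore the reverse pointwise comparison $\varphi_\mu(x)\gr e^{-C\Lambda_\mu^*(x)-Cn}$. A direct attack from the definitions does not seem to work, since one must exhibit, for every $x$ and every half-space $H\ni x$, a lower bound on $\mu(H)$ controlled by the Legendre conjugate $\Lambda_\mu^*(x)$ with only an $e^{O(n)}$ loss. The natural route is indirect, passing through one of the intermediate bodies (K.~Ball's $K_t(\mu)$ or the one-sided centroid bodies $Z_t^+(\mu)$) whose measures can be related both to $e^{-\Lambda_\mu^*}$ via Brunn--Minkowski-type volume estimates and to $\varphi_\mu$ via standard centroid-body--floating-body duality; this is the program set up in Section~\ref{section-2}.
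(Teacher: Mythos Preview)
Your approach is exactly the paper's: reduce to a pointwise inequality of the form $\varphi_\mu(x)^{-1}\ls Ce^{\beta\Lambda_\mu^*(x)}$ and then apply Theorem~\ref{th:moments}. The reverse comparison you flag as ``the main obstacle'' is already available in Section~\ref{section-2}, but not through the indirect route via $K_t$ or $Z_t^+$ that you sketch; it is the Brazitikos--Chasapis inequality quoted in the proof of Proposition~\ref{prop:floating-1}, namely
\[
\Lambda_\mu^*(x)\gr\ln\left(\frac{\varepsilon}{(2\varphi_\mu(x))^{1-\varepsilon}}\right)
\]
for every $\varepsilon\in(0,1)$. Taking $\varepsilon=1/2$ gives $\varphi_\mu(x)^{-1}\ls 8\,e^{2\Lambda_\mu^*(x)}$ directly, with no $e^{\gamma n}$ loss at all. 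Your ``one line'' then finishes verbatim as in the paper: $J_\mu(p)\ls 8^p\,\mathbb{E}_\mu\bigl[e^{2p\Lambda_\mu^*}\bigr]<\infty$ whenever $2p\ls c/n$.
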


We establish the assertion of Theorem~\ref{th:negative-phi} with the constant $c=\frac{1}{32}$. A closer inspection
of the proof shows that $J_\mu(p)$ is finite for every $p>0$ such that the expectation
$\mathbb{E}\!\left[e^{2p\,\Lambda_\mu^{\ast}}\right]$ appearing in Theorem~\ref{th:moments} is finite.

\smallskip

Finally, we discuss the connection of the integrability properties of $\Lambda_{\mu}^{\ast}$
with the notion of affine surface area. In the language of the present work,
Sch\"{u}tt and Werner \cite{Schutt-Werner-1990} have proved that for every convex body $K$ of volume $1$ in ${\mathbb R}^n$ one has that
\begin{equation}\label{eq:constant-floating}\lim_{s\to\infty} e^{\frac{2s}{n+1}}(1-\mu_K(T_s(\mu_K)))=\frac{1}{2}\left(\frac{n+1}{\omega_{n-1}}\right)^{\frac{2}{n+1}}
{\rm as}(K)\end{equation} where $\mu_K$ is the Lebesgue measure on $K$, ${\rm as}(K)$ is the affine surface area of $K$ and $\omega_n$ is the volume of the
Euclidean unit ball $B_2^n$ (see Subsection~\ref{subsection-4.3} for a discussion).
In particular,
$$\sup\{e^{\frac{2s}{n+1}}(1-\mu_K(T_s(\mu_K))):s>0\}<\infty .$$
We provide an analogue of this fact in the more general setting of log-concave probability measures.

\begin{theorem}\label{th:T-measure}For every log-concave probability measure $\mu$ on $\mathbb{R}^n$,
$$\sup\{e^{\frac{1}{8}\frac{s}{n}}(1-\mu(T_s(\mu))):s>0\}\ls c_n,$$
where $c_n=\exp(n\ln n/8)$ is a constant depending only on $n$.
\end{theorem}

The constant $\tfrac{1}{8}$ obtained in Theorem~\ref{th:T-measure} is most likely non-optimal. Nevertheless, from the example 
of the uniform measure on a convex body, and in particular from~\eqref{eq:constant-floating}, we see that
Theorem~\ref{th:T-measure} cannot hold with a constant $c$ strictly larger than $2$.

\section{Families of convex bodies associated with log-concave measures}\label{section-2}

We work in $\mathbb{R}^n$ and use standard notation: $\langle\cdot ,\cdot\rangle $ is the usual inner product in ${\mathbb R}^n$
and the Euclidean norm is denoted by $|\cdot |$. We write $B_2^n$ for the Euclidean unit ball and $S^{n-1}$ for the unit sphere. Lebesgue measure in ${\mathbb R}^n$ is denoted by $|\cdot |$ and $\sigma $ is the rotationally invariant probability measure on $S^{n-1}$. The volume $\omega_n:=|B_2^n|$ of the Euclidean unit ball is equal to $\omega_n=\pi^{n/2}/\Gamma\left(\frac{n}{2}+1\right)=(c_n/n)^{n/2}$ for
some constant $c_n\approx 1$. Whenever we write $a\approx b$, we mean that there exist absolute constants $c_1,c_2>0$ such that $c_1a\ls b\ls c_2a$. We use the letters $c,c_1,c_2$ etc. to denote absolute positive
constants whose value may change from line to line.

We say that $K\subset {\mathbb R}^n$ is a convex body if it is compact, convex and has non-empty interior. We often consider bounded convex sets $K$ in ${\mathbb R}^n$ with $0\in {\rm int}(K)$; since the closure of such a set
is a convex body, we shall call these sets convex bodies too. We say that $K$ is centrally symmetric if $x\in K$ implies
that $-x\in K$ and that $K$ is centered if the barycenter ${\rm bar}(K)=\frac{1}{|K|}\int_Kx\,dx$ of $K$ is at the origin.
The radial function $\varrho_K$ of a convex body $K$ with $0\in {\rm int}(K)$ is the function
$\varrho_K(x)=\sup \{\lambda >0:\lambda x\in K\}$ defined for all $x\neq 0$, and
the support function of $K$ is the function $h_K(x) = \sup\{\langle x,y\rangle :y\in K\}$, $x\in {\mathbb R}^n$.
The polar body $K^{\circ }$ of a convex body $K$ in ${\mathbb R}^n$ with $0\in {\rm int}(K)$ is the convex body
\begin{equation*}
K^{\circ}:=\bigl\{y\in {\mathbb R}^n: \langle x,y\rangle \ls 1\;\hbox{for all}\; x\in K\bigr\}.
\end{equation*}
We say that a Borel probability measure $\mu$ on $\mathbb R^n$ is log-concave if $\mu(H)<1$ for every hyperplane $H$ in ${\mathbb R}^n$
(we then say that $\mu$ is full-dimensional) and $\mu(\lambda A+(1-\lambda)B) \gr \mu(A)^{\lambda}\mu(B)^{1-\lambda}$ for any pair of compact sets $A,B$ in ${\mathbb R}^n$
and any $\lambda \in (0,1)$. Borell \cite{Borell-1974} has proved that, under these assumptions, $\mu $ has a log-concave density $f_{{\mu }}$. Recall that a function $f:\mathbb R^n \rightarrow [0,\infty)$ is called log-concave if its support $\{f>0\}$ is a convex set in ${\mathbb R}^n$ and the restriction of $\ln{f}$ to it is concave. The Brunn-Minkowski inequality implies that if $K$ is a convex body in $\mathbb R^n$ then the indicator function $\mathds{1}_{K} $ of $K$ is the density of a log-concave measure, the Lebesgue measure on $K$.

We say that $\mu $ is even if $\mu (-B)=\mu (B)$ for every Borel subset $B$ of ${\mathbb R}^n$, and that $\mu $ is centered if the barycenter ${\rm bar}(\mu)$ of $\mu$
is at the origin, equivalently if
\begin{equation*}
\int_{\mathbb R^n} \langle x, \xi \rangle d\mu(x) = \int_{\mathbb R^n} \langle x, \xi \rangle f_{\mu}(x) dx = 0
\end{equation*} for all $\xi\in S^{n-1}$. We shall use the following result of Fradelizi from \cite{Fradelizi-1997}: if $\mu $ is a centered 
log-concave probability measure on ${\mathbb R}^n$ then
\begin{equation}\label{eq:frad-2}\|f_{\mu }\|_{\infty }\ls e^nf_{\mu }(0).\end{equation}
For any log-concave measure $\mu$ on ${\mathbb R}^n$ with density $f_{\mu}$, we define the isotropic constant of $\mu $ by
\begin{equation*}
L_{\mu }:=\left (\frac{\sup_{x\in {\mathbb R}^n} f_{\mu} (x)}{\int_{{\mathbb
R}^n}f_{\mu}(x)dx}\right )^{\frac{1}{n}} [\det \textrm{Cov}(\mu)]^{\frac{1}{2n}},\end{equation*}
where $\textrm{Cov}(\mu)$ is the covariance matrix of $\mu$ with entries
\begin{equation*}\textrm{Cov}(\mu )_{ij}:=\frac{\int_{{\mathbb R}^n}x_ix_j f_{\mu}
(x)\,dx}{\int_{{\mathbb R}^n} f_{\mu} (x)\,dx}-\frac{\int_{{\mathbb
R}^n}x_i f_{\mu} (x)\,dx}{\int_{{\mathbb R}^n} f_{\mu}
(x)\,dx}\frac{\int_{{\mathbb R}^n}x_j f_{\mu}
(x)\,dx}{\int_{{\mathbb R}^n} f_{\mu} (x)\,dx}.\end{equation*}
A log-concave probability measure $\mu $ on ${\mathbb R}^n$
is called isotropic if it is centered and $\textrm{Cov}(\mu )=I_n$,
where $I_n$ is the identity $n\times n$ matrix. Note that if $\mu$ is isotropic then $L_{\mu }=\|f_{\mu }\|_{\infty }^{1/n}$.

Let $\mu $ and $\nu$ be two log-concave probability measures on ${\mathbb R}^n$. Let $T:{\mathbb R}^n\to {\mathbb R}^n$ be a measurable function which is defined $\mu $-almost everywhere and satisfies
\begin{equation*}\nu (B)=\mu (T^{-1}(B))\end{equation*}
for every Borel subset $B$ of ${\mathbb R}^n$. We then say that $T$ pushes forward $\mu $ to $\nu $ and write $T_*\mu=\nu$. It is
easy to see that $T_*\mu =\nu $ if and only if for every bounded Borel
measurable function $g:{\mathbb R}^n\to {\mathbb R}$ we have
\begin{equation}\label{eq:push-forward}\int_{{\mathbb R}^n}g(x)d\nu (x)=\int_{{\mathbb R}^n}g(T(y))d\mu (y).\end{equation}
It is not hard to check that for every log-concave probability measure $\mu $ on $\mathbb{R}^n$ there exists
an invertible affine transformation $T$ such that the log-concave probability measure $T_{\ast }\mu $ is isotropic,
and $L_{T_{\ast}\mu}=L_{\mu}$.

The hyperplane conjecture asks if there exists an absolute constant $C>0$ such that
\begin{equation*}L_n:= \max\{ L_{\mu }:\mu\ \hbox{is an isotropic log-concave probability measure on}\ {\mathbb R}^n\}\ls C\end{equation*}
for all $n\gr 2$. The classical estimates $L_n\ls c\sqrt[4]{n}\ln n$ by Bourgain \cite{Bourgain-1991}
and, fifteen years later, $L_n\ls c\sqrt[4]{n}$ by Klartag \cite{Klartag-2006} remained the best known until
2020. In a breakthrough work, Chen \cite{C} proved that for any $\epsilon >0$
one has $L_n\ls n^{\epsilon}$ for all large enough $n$. This development was the starting point for
a series of important works, including a technical breakthrough by Guan \cite{Guan}, that culminated
in the final affirmative answer to the problem by Klartag and Lehec~\cite{KL} who showed that $L_n\ls C$.
Shortly thereafter, Bizeul~\cite{Bizeul-2025} provided an alternative proof.

In the next three subsections, we introduce and compare the families of convex bodies that are associated with
a centered log-concave probability measure $\mu$ on ${\mathbb R}^n$ and have been mentioned in the introduction.
Some of these families can be defined for any probability measure and the assumption that $\mu$ is log-concave
or centered is unnecessary for some of the results that we present. However, for simplicity, we chose to define
and state everything in the setting of centered log-concave probability measures.

We refer to Schneider's book \cite{Schneider-book} for basic facts from the Brunn-Minkowski theory and to the book
\cite{AGA-book} for basic facts from asymptotic convex geometry. We also refer to \cite{BGVV-book} for more information on isotropic
convex bodies and log-concave probability measures.

\subsection{Level sets of the density and K.~Ball's bodies}\label{subsection-2.1}

Let $\mu$ be a centered log-concave probability measure on $\mathbb{R}^n$. For every $t\gr 1$ we consider the convex set
$$R_t(\mu )=\{x\in {\mathbb R}^n:f_{\mu }(x)\gr e^{-t}f_{\mu }(0)\}.$$
Using the log-concavity of $f_{\mu }$ we easily check that $R_t(\mu )$ is convex. Note also that $0\in {\rm int}(R_t(\mu ))$.
To show that $R_t(\mu)$ is bounded, we recall that since $f_{\mu}$ is log-concave and has finite positive integral we
have that there exist constants $A,B>0$ such that
\begin{equation}\label{eq:A-B}f_{\mu}(x)\ls Ae^{-B|x|}\end{equation}
for all $x\in {\mathbb R}^n$ (see \cite[Lemma~2.2.1]{BGVV-book}). Therefore, if $x\in R_t(\mu)$ we get
that $|x|\ls\frac{1}{B}\big(\ln (A/f_{\mu}(0))+t\big)$. Another consequence of \eqref{eq:A-B} is that $f_{\mu}$ has
finite moments of all orders.

The next proposition, which will be very useful for us, shows that the measure of $R_t(\mu)$ increases to $1$
exponentially fast as $t\to\infty $.

\begin{proposition}\label{prop:r-2}For every $t\gr 5(n-1)$ we have that $\mu(R_t(\mu ))\gr 1-e^{-t/4}$.
\end{proposition}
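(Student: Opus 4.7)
The plan is to set $\varphi(x):=-\log(f_{\mu}(x)/f_{\mu}(0))$, a convex function with $\varphi(0)=0$ such that $R_u(\mu)=\{\varphi\ls u\}$ for every $u\gr 0$. Writing $G(u):=|R_u(\mu)|$, I would reduce the proof to a quantitative study of $G$, combining the Brunn--Minkowski inequality with the normalization $\int f_{\mu}=1$.

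The first step is a layer-cake / change of variables computation. Substituting $s=e^{-u}f_{\mu}(0)$ in the layer-cake expression of $1-\mu(R_t(\mu))=\int_{\{f_{\mu}<e^{-t}f_{\mu}(0)\}}f_{\mu}\,dx$, and separately in the identity $1=\int f_{\mu}\,dx$, produces
\begin{equation*}
1-\mu(R_t(\mu))=f_{\mu}(0)\int_t^{\infty}\bigl(G(u)-G(t)\bigr)e^{-u}\,du\qquad\text{and}\qquad 1=f_{\mu}(0)\int_{\mathbb R}G(u)e^{-u}\,du,
\end{equation*}
where the second integrand vanishes on $(-\infty,-n)$ by Fradelizi's bound \eqref{eq:frad-2}. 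At this point everything has been reduced to the behaviour of~$G$.

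The second step exploits convexity via Brunn--Minkowski. Since $\varphi$ is convex with $\varphi(0)=0$, one has $\lambda R_{u_1}+(1-\lambda)R_{u_2}\subseteq R_{\lambda u_1+(1-\lambda)u_2}$ for all $u_1,u_2\gr 0$, so $u\mapsto G(u)^{1/n}$ is concave on $[0,\infty)$. Combined with $G^{1/n}(0)\gr 0$, this yields the two-sided polynomial comparison
\begin{equation*}
G(u)\ls (u/t)^n G(t)\ \text{ for }\ u\gr t,\qquad G(u)\gr (u/t)^n G(t)\ \text{ for }\ 0\ls u\ls t.
\end{equation*}
Inserting the lower estimate into the normalization gives
\begin{equation*}
1\gr \frac{f_{\mu}(0)G(t)}{t^n}\int_0^t u^n e^{-u}\,du\gr \frac{n!}{2}\cdot\frac{f_{\mu}(0)G(t)}{t^n}
\end{equation*}
(using $\int_0^t u^n e^{-u}\,du\gr n!/2$ once $t$ is sufficiently large compared with $n$), hence $f_{\mu}(0)G(t)\ls 2t^n/n!$. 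Substituting the upper estimate for $G$ on $[t,\infty)$ into the reduction formula yields
\begin{equation*}
1-\mu(R_t(\mu))\ls \frac{f_{\mu}(0)G(t)}{t^n}\,\Gamma(n+1,t)\ls \frac{2\,\Gamma(n+1,t)}{n!}.
\end{equation*}
The standard bound $\Gamma(n+1,t)\ls 2t^n e^{-t}$ (valid for $t\gr 2n$) together with Stirling's formula $n!\gr \sqrt{2\pi n}(n/e)^n$ then gives an upper estimate of the form $C(et/n)^n e^{-t}$, which is dominated by $e^{-t/4}$ precisely in the prescribed range $t\gr 5(n-1)$.

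The main obstacle is the quantitative bookkeeping in this last step: matching the threshold $5(n-1)$ with the exponent $1/4$ requires tracking Stirling's constants and the ratio-test estimate $\Gamma(n+1,t)\ls 2t^n e^{-t}$ precisely, so that the factor $(et/n)^n$ is absorbed into $e^{3t/4}$ with the right margin. The qualitative picture, however, is robust: the decay rate is driven by the competition between the polynomial volume growth of $R_u$ (controlled by Brunn--Minkowski) and the exponential weight $e^{-u}$ in the normalization, and this is exactly what the two-sided comparison of $G$ against $(u/t)^n G(t)$ captures.
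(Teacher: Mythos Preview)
Your approach is sound and genuinely different from the paper's. The paper works direction by direction: for each $\xi\in S^{n-1}$ it applies a one-dimensional lemma of Klartag (Lemma~\ref{lem:r-1}) to the log-concave function $r\mapsto f_\mu(r\xi)$, obtaining
\[
\int_0^{\varrho_{R_t}(\xi)}r^{n-1}f_\mu(r\xi)\,dr\gr (1-e^{-t/4})\int_0^\infty r^{n-1}f_\mu(r\xi)\,dr,
\]
and then integrates in polar coordinates. You instead argue globally, using Brunn--Minkowski on the convex sublevel sets $R_u$ to get concavity of $|R_u|^{1/n}$ and reduce everything to the incomplete-Gamma ratio $1-\mu(R_t)\ls \Gamma(n{+}1,t)/\gamma(n{+}1,t)$. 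The paper's route delivers the constants $5(n-1)$ and $1/4$ with no bookkeeping, and its one-dimensional lemma is reused later (e.g.\ in Proposition~\ref{prop:r-4}); your route is more conceptual and makes transparent that the estimate is really a Gamma-distribution tail bound.

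One caution on the constants: with the two intermediate relaxations you make ($\gamma(n{+}1,t)\gr n!/2$ and $\Gamma(n{+}1,t)\ls 2t^n e^{-t}$) the final inequality $4t^n e^{-t}/n!\ls e^{-t/4}$ narrowly fails at the boundary; for instance at $n=2$, $t=5$ one gets $50e^{-5}\approx 0.337>e^{-5/4}\approx 0.287$. The tighter bound $1-\mu(R_t)\ls \Gamma(n{+}1,t)/\gamma(n{+}1,t)$, obtained by combining your two displays without inserting the crude factors of $2$, does meet the target there ($\approx 0.142<0.287$), so the scheme is fine---but matching the paper's exact threshold requires keeping the incomplete-Gamma ratio intact rather than splitting and then reassembling it with Stirling.
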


The proof is based on some one-dimensional considerations. Let $g:[0,\infty)\to[0,\infty)$ be a log-concave function. It is proved in \cite[Lemma~4.3]{Klartag-2007clt} that for every $m>0$ the equation $(\ln g)^{\prime}(r)=-\frac{m}{r}$ has a unique
solution $r_m>0$. Moreover, for every $0\ls r\ls r_m$ one has that
\begin{equation}\label{eq:r-1}g(r)\gr e^{-m}g(0).\end{equation}
Since $(\ln g)^{\prime}$ is decreasing on the interval where it is defined, we see that $(\ln g)^{\prime}(r)\ls -m/r_m$
for every $r>r_m$ such that $g(r)>0$. Therefore, for every $\alpha\gr 1$ and every $r\gr\alpha r_m$ we have that
\begin{equation}\label{eq:r-2}g(r)\ls e^{-(\alpha-1)m}g(r_m).\end{equation}
The next lemma is essentially contained in Klartag's \cite[Lemma~5.2]{Klartag-2007clt}.

\begin{lemma}\label{lem:r-1}Let $m> 0$, $\alpha\gr 5$ and let $g:[0,\infty)\to [0,\infty)$ be a log-concave
function with finite positive integral. Let $\varrho_{\alpha m}:=\sup\{r>0:g(r)\gr e^{-\alpha m}g(0)\}$. Then,
$$\int_0^{\varrho_{\alpha m}}r^{m}g(r)\,dr\gr\left(1-e^{-\alpha m/4}\right)\int_0^{\infty}r^{m}g(r)\,dr.$$
\end{lemma}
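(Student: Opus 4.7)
The plan is to work with the auxiliary function $F(r):=r^m g(r)$, which is log-concave on $(0,\infty)$ and whose unique maximum is attained at $r_m$: the defining relation $(\ln g)'(r_m)=-m/r_m$ reads exactly $(\ln F)'(r_m)=m/r_m+(\ln g)'(r_m)=0$. In particular $\varrho_{\alpha m}\gr r_m$, and I will repeatedly invoke the tangent-line inequality $\ln g(r)\ls\ln g(r_m)-m(r/r_m-1)$, valid for all $r\gr 0$ by concavity of $\ln g$ at $r_m$.

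The first step is a pointwise height comparison. Writing $\beta:=\varrho_{\alpha m}/r_m\gr 1$ and $L:=\ln g(r_m)-\ln g(0)$, the tangent-line bound at $r=\varrho_{\alpha m}$ forces $\beta\ls\alpha+1+L/m$, while \eqref{eq:r-1} at $r=r_m$ gives $L\gr -m$. Since
\[\frac{F(\varrho_{\alpha m})}{F(r_m)}=\beta^m\,e^{-\alpha m-L},\]
taking logarithms and writing $x:=L/m\gr -1$ reduces matters to showing $\ln(\alpha+1+x)-x\ls 3\alpha/4$. The left-hand side is decreasing in $x$ (its derivative is $1/(\alpha+1+x)-1<0$), so its maximum on $x\gr -1$ is $\ln\alpha+1$, and the inequality $\ln\alpha+1\ls 3\alpha/4$ holds for $\alpha\gr 5$. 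Set $\eta:=\ln(F(r_m)/F(\varrho_{\alpha m}))\gr\alpha m/4$.

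The second step converts this into the integral inequality via log-concavity of $F$. Concavity of $\ln F$ on $[r_m,\infty)$ forces its slope at any $r\gr\varrho_{\alpha m}$ to be at most the secant slope $-\eta/(\varrho_{\alpha m}-r_m)$ from $r_m$ to $\varrho_{\alpha m}$, so
\[\int_{\varrho_{\alpha m}}^\infty F(r)\,dr\ls F(\varrho_{\alpha m})\,\frac{\varrho_{\alpha m}-r_m}{\eta}.\]
Dually, the log-linear lower bound $F(r)\gr F(r_m)^{1-s}F(\varrho_{\alpha m})^s$ with $s=(r-r_m)/(\varrho_{\alpha m}-r_m)$ yields
\[\int_{r_m}^{\varrho_{\alpha m}} F(r)\,dr\gr F(r_m)\,\frac{(\varrho_{\alpha m}-r_m)(1-e^{-\eta})}{\eta}.\]
Since $\int_0^{\varrho_{\alpha m}}F\gr\int_{r_m}^{\varrho_{\alpha m}}F$ and $x\mapsto 1/(e^x-1)$ is decreasing, combining gives
\[\frac{\int_{\varrho_{\alpha m}}^\infty F}{\int_0^{\varrho_{\alpha m}}F}\ls\frac{e^{-\eta}}{1-e^{-\eta}}\ls\frac{e^{-\alpha m/4}}{1-e^{-\alpha m/4}},\]
which rearranges to $\int_{\varrho_{\alpha m}}^\infty F\ls e^{-\alpha m/4}\int_0^\infty F$, i.e.\ the claim.

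The delicate point is the optimization in the first step: neither $L$ nor $\beta$ is individually bounded, since a strongly peaked $g$ can drive $L$ to infinity and the constraint then permits $\beta$ to grow with it; the tangent relation $\beta\ls\alpha+1+L/m$ couples them in precisely the combination $\beta^m e^{-L}$ that appears in the ratio, and the hypothesis $\alpha\gr 5$ is exactly what forces the worst-case value $\alpha^m e^{-(\alpha-1)m}$ below $e^{-\alpha m/4}$.
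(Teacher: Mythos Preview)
Your proof is correct. The overall architecture matches the paper's---a pointwise height bound $F(\varrho_{\alpha m})\ls e^{-\alpha m/4}F(r_m)$ followed by a conversion to the integral inequality---and your first step lands on the identical key elementary inequality $\ln\alpha+1\ls 3\alpha/4$ for $\alpha\gr 5$ (the paper writes it as $3\alpha\gr 4(\ln\alpha+1)$). The organization differs slightly: the paper introduces an auxiliary point $r'=\sup\{r:g(r)\gr e^{-(\alpha-1)m}g(r_m)\}$, bounds $r'\ls\alpha r_m$ via \eqref{eq:r-2}, and then uses $\varrho_{\alpha m}\gr r'$; you instead work directly with $\varrho_{\alpha m}$ and the tangent-line constraint, which is a clean reparametrization of the same estimate. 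The genuine methodological difference is in the second step. The paper invokes the (non-obvious) fact that $\psi(s)=\varphi(\Phi^{-1}(s))$ is concave on $(0,1)$, where $\Phi$ is the cumulative integral of $\varphi=F$, and reads off $\Phi(r')\gr 1-\varepsilon$ from $\varphi(r')\ls\varepsilon\max\varphi$. Your route is more elementary and self-contained: you sandwich $F$ between explicit exponentials on $[r_m,\varrho_{\alpha m}]$ and on $[\varrho_{\alpha m},\infty)$, using only the chord-and-secant inequalities for the concave $\ln F$, and the factors $(\varrho_{\alpha m}-r_m)/\eta$ cancel neatly. This avoids the density-quantile concavity lemma at the cost of a short explicit computation; the paper's trick, once one knows it, is a one-line black box that applies whenever a log-concave density drops to a given fraction of its maximum.
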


\begin{proof}We may assume that $\int_0^{\infty}r^{m}g(r)\,dr=1$. For $r>0$ we define
$$\varphi(r)=r^{m}g(r)\quad\hbox{and}\quad \Phi(r)=\int_0^r\varphi(u)\,du.$$
Note that $\varphi$ is a log-concave function with $\int_0^{\infty}\varphi(r)\,dr=1$.
Define $r_m$ as the unique solution of $(\ln g)^{\prime}(r)=-m/r$. Then, equivalently, we have
that $\varphi^{\prime}(r_m)=0$ and since $\varphi$ is log-concave and $\varphi(r_m)>0$ we conclude
that $\varphi(r_m)=\max(\varphi)$. From \eqref{eq:r-1} we know that
$$g(r)\gr e^{-m}g(0)$$
for all $0\ls r\ls r_m$. It follows that if $M:=g(r_m)$ then, for every $r>0$ that satisfies
$g(r)\gr e^{-(\alpha-1)m}M$ we have that $g(r)\gr e^{-\alpha m}g(0)$, and hence
$$\varrho_{\alpha m}\gr r^{\prime}:=\sup\{r>0:g(r)\gr e^{-(\alpha -1)m}M\}.$$
From the definition of $r^{\prime}$ it is clear that $r_m\ls r^{\prime}$.
Also, since $g(r^{\prime})=e^{-(\alpha-1)m}M$, from \eqref{eq:r-2} we see that $r^{\prime}\ls\alpha r_m$. It follows that
\begin{equation*}
\varphi(r^{\prime}) =\varphi(r_m)\left(\frac{r^{\prime}}{r_m}\right)^{m}\frac{g(r^{\prime})}{M}\ls
\varphi(r_m)\alpha^{m}e^{-(\alpha-1)m}\ls \varphi(r_m)e^{-\alpha m/4}=e^{-\alpha m/4}\max(\varphi).
\end{equation*}
The last inequality follows from the fact that $3\alpha \gr 4(\ln \alpha +1)$ when $\alpha\gr 5$.
It is not hard to check that $\psi(s)=\varphi(\Phi^{-1}(s))$ is concave on $(0,1)$.  Since $\varphi$
attains its maximum at $r_m$, we have that $\psi$ attains its maximum at $\Phi(r_m)$. Then, for any
$0<\varepsilon\ls 1$, using the concavity of $\psi$ we can check that if
$s\gr \Phi(r_m)$ and $\psi(s)\ls\varepsilon\cdot\max(\psi)$
we must have that $s\gr 1-\varepsilon$. It follows that if $r\gr r_m$ and
$\varphi(r)\ls\varepsilon\cdot\max(\psi)=\varepsilon\cdot\max(\varphi)$ then
$\Phi(r)\gr 1-\varepsilon$. Since $r^{\prime}\gr r_m$ and $r^{\prime}$
satisfies $\varphi(r^{\prime})\ls e^{-\alpha m/4}\max(\varphi)$, we conclude that
$$\Phi(r^{\prime})\gr 1-e^{-\alpha m/4}.$$
The result follows from the fact that $\varrho_{\alpha m}\gr r^{\prime}$.
\end{proof}

Lemma~\ref{lem:r-1} immediately implies Proposition~\ref{prop:r-2}.

\begin{proof}[Proof of Proposition~$\ref{prop:r-2}$]Fix $\xi\in S^{n-1}$ and consider the log-concave integrable function $g:[0,\infty)\to[0,\infty)$
defined by $g(r)=f_{\mu}(r\xi)$. Applying Lemma~\ref{lem:r-1} with $m=n-1$ and $\alpha=t/m$ we see that
$$\int_0^{\varrho_{R_t(\mu)}(\xi)}r^{n-1}f_{\mu}(r\xi)\,dr
\gr \left(1-e^{-t/4}\right)\int_0^{\infty}r^{n-1}f_{\mu}(r\xi)\,dr.$$
Then, integration in polar coordinates shows that
\begin{align*}\mu(R_t(\mu )) &=n\omega_n\int_{S^{n-1}}\int_0^{\varrho_{R_t(\mu)}(\xi)}r^{n-1}f_{\mu}(r\xi)\,dr\,d\sigma(\xi)\\
&\gr \left(1-e^{-t/4}\right)\,n\omega_n\int_{S^{n-1}}\int_0^{\infty}r^{n-1}f_{\mu}(r\xi)\,dr\,d\sigma(\xi)=1-e^{-t/4},
\end{align*}
which is the assertion of the proposition.
\end{proof}

\begin{remark}\label{rem:referee}\rm The referee of this article has very kindly communicated to us the following very short and elegant proof
of Proposition~\ref{prop:r-2}. Consider the convex function $\psi$ defined by $e^{-\psi(x)}=f_{\mu}(x)/f_{\mu}(0)$. Note that $\psi(0)=0$.
Then
\begin{align*}
\int_{\mathbb{R}^n}e^{\psi(x)/2}d\mu(x) &=f_{\mu}(0)\int_{\mathbb{R}^n}e^{-\psi(x)/2}dx=f_{\mu}(0)\int_{\mathbb{R}^n}e^{-(\psi(x)+\psi(0))/2}dx\\
&\ls f_{\mu}(0)\int_{\mathbb{R}^n}e^{-\psi(x/2)}dx=2^nf_{\mu}(0)\int_{\mathbb{R}^n}e^{-\psi(x)}dx=2^n.
\end{align*}
For any $t>0$ we have $R_t(\mu)=\{x:\psi(x)\ls t\}$. From Markov's inequality we get
$$1-\mu(R_t(\mu))=\mu(\{x:\psi(x)>t\})\ls e^{-t/2}\int_{\mathbb{R}^n}e^{\psi(x)/2}d\mu(x)\ls 2^ne^{-t/2}.$$
If $t\gr (4\ln 2)n$ then $2^ne^{-t/2}\ls e^{-t/4}$, and this implies that
$$\mu(R_t(\mu))\gr 1-e^{-t/4},\qquad\text{for all}\;t\gr (4\ln 2)n.$$
\end{remark} 

A second family of convex bodies associated with a centered log-concave probability measure $\mu$ on
$\mathbb{R}^n$ was introduced by K.~Ball, who also established their convexity in \cite{Ball-1988}: for every $t>0$, we define
\begin{equation*}
K_t(\mu)=\left\{x\in {\mathbb R}^n : \int_0^\infty r^{t-1}f_\mu(rx)\,
dr\gr \frac{f_{\mu }(0)}{t} \right\}.
\end{equation*}
From the definition it follows that the radial function of $K_t(\mu )$ is given by
\begin{equation}\label{eq:radial-Kp}
\varrho_{K_t(\mu )}(x)=\left (\frac{1}{f_{\mu }(0)}\int_0^{\infty}tr^{t-1}f_{\mu }(rx)\,dr\right )^{1/t}\end{equation}
for $x\neq 0$. For every $0 <t\ls s$ we have that
\begin{equation}\label{eq:inclusions-Kp}\frac{\Gamma(t+1)^{\frac{1}{t}}}{\Gamma(s+1)^{\frac{1}{s}}} K_s(\mu )\subseteq
K_{t}(\mu )\subseteq e^{\frac{n}{t}-\frac{n}{s}}K_{s}(\mu ).
\end{equation} A proof is given in \cite[Proposition~2.5.7]{BGVV-book}.

An immediate consequence of the definitions is the next inclusion between the bodies $K_t(\mu)$ and $R_t(\mu)$.

\begin{proposition}\label{prop:r-3}For every $s>t$ we have that
$$R_t(\mu )\subseteq e^{t/s}K_s(\mu ).$$
\end{proposition}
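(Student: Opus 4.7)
The plan is to prove the inclusion by comparing radial functions of the two star-shaped bodies. Both $R_t(\mu)$ and $K_s(\mu)$ contain the origin in their interior and are convex, so it is enough to show that $\varrho_{R_t(\mu)}(\xi)\ls e^{t/s}\varrho_{K_s(\mu)}(\xi)$ for every $\xi\in S^{n-1}$. The radial function of $K_s(\mu)$ is given explicitly by \eqref{eq:radial-Kp}, so the claim reduces to a one-dimensional estimate for the integral $\int_0^{\infty}sr^{s-1}f_{\mu}(r\xi)\,dr$ along each ray from the origin.

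First I would fix $\xi\in S^{n-1}$ and let $\varrho:=\varrho_{R_t(\mu)}(\xi)$. Since $R_t(\mu)$ is convex and contains $0$, it is star-shaped about $0$, so for every $r\in [0,\varrho]$ the point $r\xi$ lies in $R_t(\mu)$ and hence $f_{\mu}(r\xi)\gr e^{-t}f_{\mu}(0)$ by the very definition of $R_t(\mu)$. Restricting the integration to the segment $[0,\varrho]$ and inserting this pointwise lower bound gives
\begin{equation*}
\int_0^{\infty}sr^{s-1}f_{\mu}(r\xi)\,dr\gr e^{-t}f_{\mu}(0)\int_0^{\varrho}sr^{s-1}\,dr=e^{-t}f_{\mu}(0)\,\varrho^{s}.
\end{equation*}
Dividing by $f_{\mu}(0)$ and using \eqref{eq:radial-Kp}, this rewrites as $\varrho_{K_s(\mu)}(\xi)^s\gr e^{-t}\varrho_{R_t(\mu)}(\xi)^s$, which after taking $s$-th roots is exactly the desired inequality $\varrho_{R_t(\mu)}(\xi)\ls e^{t/s}\varrho_{K_s(\mu)}(\xi)$.

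There is no real obstacle here: the only inputs are the defining pointwise inequality for $R_t(\mu)$, the star-shapedness that lets us propagate it to the whole segment $[0,\varrho]$, and the explicit radial formula for $K_s(\mu)$. I note that the argument does not actually require $s>t$; the inequality $\int_0^{\varrho}sr^{s-1}\,dr=\varrho^{s}$ used above is insensitive to the size of $s$ relative to $t$. The hypothesis $s>t$ in the statement presumably plays a role only in applications, where it is combined with the inclusions \eqref{eq:inclusions-Kp} between the bodies $K_t(\mu)$ themselves.
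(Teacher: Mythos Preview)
Your proof is correct and essentially identical to the paper's: both fix $\xi\in S^{n-1}$, use the defining lower bound $f_{\mu}(r\xi)\gr e^{-t}f_{\mu}(0)$ on $[0,\varrho_{R_t(\mu)}(\xi)]$, integrate $sr^{s-1}$ over that segment, and read off the radial-function inequality from \eqref{eq:radial-Kp}. Your observation that the hypothesis $s>t$ is not actually used in the argument is also correct.
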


\begin{proof}Let $\xi\in S^{n-1}$ and define $g:[0,\infty )\to [0,\infty )$ by
$g(r)=f_{\mu }(r\xi )$. By the definition of $K_s(\mu )$ we have
$$[\varrho_{K_s(\mu )}(\xi )]^s=\frac{s}{f_{\mu }(0)}\int_0^{\infty }r^{s-1}g(r)dr.$$
Since $g(r)\gr e^{-t}f_{\mu}(0)$ for all $0\ls r\ls\varrho_{R_t(\mu)}(\xi)$ we see that
$$\int_0^{\infty }r^{s-1}g(r)dr\gr \int_0^{\varrho_{R_t(\mu)}(\xi)}r^{s-1}g(r)dr
\gr e^{-t}f_{\mu}(0)\int_0^{\varrho_{R_t(\mu)}(\xi)}r^{s-1}dr=\frac{f_{\mu}(0)}{s}e^{-t}[\varrho_{R_t(\mu )}(\xi )]^s.$$
It follows that $\varrho_{K_s(\mu )}(\xi )\gr e^{-t/s}\varrho_{R_t(\mu )}(\xi )$. Since $\xi\in S^{n-1}$
was arbitrary, this shows that $R_t(\mu )\subseteq e^{t/s}K_s(\mu )$.
\end{proof}

In the opposite direction we have the next result.

\begin{proposition}\label{prop:r-4}Let $n\gr 3$. For every $t\gr 2n$ and any $\alpha\gr 5$ we have that
$$R_{\alpha t}(\mu )\supseteq \left(1-\frac{2n}{t}\right)K_t(\mu ).$$
Under the additional assumption that $\mu$ is even, we have that
$$R_{\alpha t}(\mu )\supseteq \left(1-e^{-\alpha t/5}\right)K_t(\mu ).$$
\end{proposition}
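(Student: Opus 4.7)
The strategy is to pass to radial functions and reduce the inclusion to a one-dimensional integral comparison, which we will handle using Lemma~\ref{lem:r-1} together with Fradelizi's pointwise estimate \eqref{eq:frad-2}. Fix $\xi\in S^{n-1}$ and set $g(r):=f_{\mu}(r\xi)$ for $r\gr 0$, a log-concave function on $[0,\infty)$ with positive finite integral. Write $\rho_K:=\varrho_{K_t(\mu)}(\xi)$ and $\rho_R:=\varrho_{R_{\alpha t}(\mu)}(\xi)$, so that by \eqref{eq:radial-Kp} we have $\rho_K^t=\frac{t}{g(0)}\int_0^{\infty}r^{t-1}g(r)\,dr$, while $g(\rho_R)=e^{-\alpha t}g(0)$. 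The theorem reduces to showing $\rho_R\gr(1-2n/t)\rho_K$ in general, and $\rho_R\gr(1-e^{-\alpha t/5})\rho_K$ in the even case, for every such $\xi$.

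First I would apply Lemma~\ref{lem:r-1} with the choice $m:=t-1$ and $\alpha':=\alpha t/(t-1)\gr 5$, which is legitimate since $\varrho_{\alpha' m}=\varrho_{\alpha t}=\rho_R$; this produces
$$\int_0^{\rho_R}r^{t-1}g(r)\,dr\gr\bigl(1-e^{-\alpha t/4}\bigr)\int_0^{\infty}r^{t-1}g(r)\,dr.$$
Combining this with the trivial upper bound $\int_0^{\rho_R}r^{t-1}g(r)\,dr\ls(\sup g)\,\rho_R^t/t$ and Fradelizi's estimate $\sup g\ls\|f_{\mu}\|_{\infty}\ls e^nf_{\mu}(0)=e^ng(0)$ gives the key estimate
$$\rho_K^t\ls\bigl(1-e^{-\alpha t/4}\bigr)^{-1}e^n\rho_R^t.$$

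For the first inclusion I would raise the target $(1-2n/t)\rho_K\ls\rho_R$ to the $t$-th power, reducing it to checking $(1-2n/t)^t(1-e^{-\alpha t/4})^{-1}e^n\ls 1$. The elementary bound $(1-2n/t)^t\ls e^{-2n}$ then reduces this further to $e^{-\alpha t/4}\ls 1-e^{-n}$, which holds whenever $t\gr 2n$ and $\alpha\gr 5$, since in that range $e^{-\alpha t/4}\ls e^{-5n/2}\ls 1-e^{-1}\ls 1-e^{-n}$. For the even case, the density $f_{\mu}$ is symmetric, so $g$ extends to an even log-concave function on $\mathbb{R}$ with maximum at the origin, hence $\sup g=g(0)$ and the factor $e^n$ drops out of the key estimate; the resulting $\rho_K^t\ls(1-e^{-\alpha t/4})^{-1}\rho_R^t$, together with the concavity inequality $(1-x)^{1/t}\gr 1-x$ valid for $x\in[0,1]$ and $t\gr 1$, yields $\rho_R\gr(1-e^{-\alpha t/4})\rho_K\gr(1-e^{-\alpha t/5})\rho_K$.

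The main obstacle is precisely the balance between the Fradelizi loss $e^n$ and the gain $(1-2n/t)^t\ls e^{-2n}$: the hypothesis $t\gr 2n$ is exactly where these two exponentials compensate to give a nontrivial inclusion, and the asymmetry of $g$, which is permitted only in the general case, is what forces this restriction. In the symmetric case no such loss appears, which explains the exponentially sharper constant $1-e^{-\alpha t/5}$.
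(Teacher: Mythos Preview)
Your proof is correct and follows essentially the same route as the paper's: reduce to radial functions, apply Lemma~\ref{lem:r-1} to the one-dimensional section, bound the truncated integral from above using $\sup g$ and Fradelizi's inequality, and balance the resulting $e^n$ loss against $(1-2n/t)^t\ls e^{-2n}$. Your choice $\alpha'=\alpha t/(t-1)$ to make $\varrho_{\alpha'm}$ match $\rho_R$ exactly is in fact slightly cleaner than the paper's version, which applies the lemma with $m=t-1$ and $\alpha$ directly and then absorbs the discrepancy between $\alpha(t-1)$ and $\alpha t$ into the weaker exponent $\alpha t/5$. One cosmetic remark: the identity $g(\rho_R)=e^{-\alpha t}g(0)$ need not hold literally (the support of $g$ may terminate before the level $e^{-\alpha t}g(0)$ is reached), but you never actually use it, so this does not affect the argument.
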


\begin{proof}Given any $\xi\in S^{n-1}$ consider the log-concave function $g:[0,\infty )\to [0,\infty )$ defined by
$g(r)=f_{\mu }(r\xi )$. Applying Lemma~\ref{lem:r-1} with $m=t-1$ we see that
$$\int_0^{\varrho_{R_{\alpha t}(\mu )}(\xi )}r^{t-1}g(r)dr\gr (1-e^{-\alpha t/5})\int_0^{\infty }r^{t-1}g(r)dr.$$
By the definition of $K_t(\mu )$ we have
$$\int_0^{\infty }r^{t-1}g(r)dr=\frac{f_{\mu }(0)}{t}[\varrho_{K_t(\mu )}(\xi )]^t.$$
On the other hand,
$$\int_0^{\varrho_{R_{\alpha t}(\mu )}(\xi )}r^{t-1}g(r)dr\ls \|f_{\mu}\|_{\infty }\int_0^{\varrho_{R_{\alpha t}(\mu )}(\xi )}r^{t-1}dr
=\frac{\|f_{\mu}\|_{\infty }}{t}[\varrho_{R_{\alpha t}(\mu )}(\xi )]^t.$$
Using also the fact that $\|f_{\mu}\|_{\infty }\ls e^nf_{\mu }(0)$ from \eqref{eq:frad-2}, we get
\begin{equation}\label{eq:r-3}e^n[\varrho_{R_{\alpha (t-1)}(\mu )}(\xi )]^t
\gr (1-e^{-\alpha t/5})[\varrho_{K_t(\mu )}(\xi )]^t\end{equation}
and the result follows because
$$e^{-n/t}(1-e^{-\alpha t/5})^{1/t}\gr (1-n/t)(1-e^{-\alpha t/5})\gr
(1-n/t)^2\gr 1-2n/t.$$ When $\mu$ is even, the term $e^n$ does not appear
in \eqref{eq:r-3}, and thus we obtain an improved estimate. \end{proof}

\subsection{Level sets of the Cram\'{e}r transform and centroid bodies}\label{subsection-2.2}

Let $\mu$ be a centered log-concave probability measure on $\mathbb R^n$. For any $t\gr 1$ we define the
$L_t$-centroid body $Z_t(\mu)$ of $\mu $ as the convex body whose support function is
\begin{equation*} h_{Z_t(\mu)}(y):=\left(
\int_{\mathbb R^n} |\langle x,y\rangle|^t f_{\mu}(x)dx \right)^{1/t},\qquad y\in {\mathbb R}^n.
\end{equation*} Note that $Z_t(\mu )$ is always centrally symmetric,
and $Z_t(T_{\ast}\mu)=T(Z_t(\mu ))$ for every $T\in GL(n)$ and $t\gr 1$.
We shall also use the fact that if $\mu$ is isotropic then
$Z_2(\mu)=B_2^n$. A variant of the $L_t$-centroid bodies of $\mu $ is defined as follows. For every $t\gr 1$ we consider the convex body
$Z_t^+(\mu )$ with support function
\begin{equation*}h_{Z_t^+(\mu )}(y)=\left (\int_{{\mathbb R}^n}\langle
x,y\rangle_+^tf_{\mu }(x)dx\right )^{1/t},\qquad y\in {\mathbb R}^n,\end{equation*} where $a_+=\max
\{a,0\}$. When $f_{\mu }$ is even, we have that $Z_t^+(\mu )=2^{-1/t}Z_t(\mu )$. In any case, it is clear that
$Z_t^+(\mu )\subseteq Z_t(\mu )$. One can also check that if $1\ls t<s$ then
\begin{equation}\label{eq:regularity}\left(\frac{4}{e}\right)^{\frac{1}{t}-\frac{1}{s}}Z_t^+(\mu )\subseteq Z_s^+(\mu )
\subseteq c_1\left(\frac{4(e-1)}{e}\right)^{\frac{1}{t}-\frac{1}{s}}\frac{s}{t}Z_t^+(\mu ).\end{equation}
(for a proof see \cite{Guedon-EMilman-2011}, where the family of bodies $\tilde{Z}_t^+(\mu)=2^{1/t}Z_t^+(\mu)$ is considered).

For every $t\gr 1$ we define \begin{equation*}M^+_t(\mu) := \left\{v\in {\mathbb R}^n : \int_{{\mathbb R}^n} \langle v, x\rangle_+^td\mu(x)\ls 1\right\}.\end{equation*}Note that
\begin{equation*}Z^+_t(\mu) := (M^+_t(\mu))^{\circ}.\end{equation*}For every $t>0$ we also define
\begin{equation*}B_t(\mu):=\{v\in{\mathbb R}^n:\Lambda^{\ast}_{\mu}(v)\ls t\}.\end{equation*}
For all $s\gr t$ we have $M^+_s(\mu)\subseteq M^+_t(\mu)$ and $Z_t^+(\mu) \subseteq Z_s^+(\mu)$.
Moreover, since $\mu $ is centered we have $\Lambda^{\ast }_{\mu }(0)=0$ by Jensen's inequality, and the convexity
of $\Lambda^{\ast}_{\mu }$ implies that $B_t(\mu) \subseteq B_s(\mu) \subseteq
\frac{s}{t}B_t(\mu)$ for all $s\gr t>0$. A proof of all these assertions
can be found in \cite{BGVV-book}.

The next proposition, which is a variant of \cite[Proposition~3.2]{Latala-Wojtaszczyk-2008} of \L ata\l a and Wojtaszczyk,
provides an inclusion relation between the bodies $Z_t^+(\mu)$ and $B_s(\mu)$. In fact, the assumption that $\mu$ is log-concave
is not required for the proof below.

\begin{proposition}\label{prop:1}There exists $t_0\gr 1$ such that, for every centered log-concave probability measure $\mu $ on ${\mathbb R}^n$ and 
every $s\gr t\gr t_0$,
\begin{equation*}Z_t^+(\mu) \subseteq \left(1+\frac{2\ln s}{s}\right)B_s(\mu).\end{equation*}
\end{proposition}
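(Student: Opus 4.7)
The plan is to reduce the claimed inclusion to a pointwise inequality between support functions. By the very definition of $\Lambda_{\mu}^{\ast}$ one has
$$B_s(\mu)=\bigcap_{\xi\in\mathbb{R}^n}\{x:\langle x,\xi\rangle\ls s+\Lambda_{\mu}(\xi)\},$$
so the inclusion $Z_t^{+}(\mu)\subseteq(1+\tfrac{2\ln s}{s})B_s(\mu)$ is equivalent to showing that for every $\xi\in\mathbb{R}^n$,
$$h_{Z_t^{+}(\mu)}(\xi)\ls\left(1+\tfrac{2\ln s}{s}\right)\bigl(s+\Lambda_{\mu}(\xi)\bigr).$$
So the entire proof boils down to this pointwise support-function estimate.

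The basic ingredient is the elementary inequality $a^{t}\ls(t/e)^{t}e^{a}$ for $a\gr 0$, which follows from maximizing $a^{t}e^{-a}$ at $a=t$. Plugged into the integral defining $h_{Z_t^{+}(\mu)}$ it yields
$$h_{Z_t^{+}(\mu)}(\xi)^{t}=\int_{\{\langle z,\xi\rangle>0\}}\langle z,\xi\rangle^{t}\,d\mu(z)\ls\bigl(\tfrac{t}{e}\bigr)^{t}\int e^{\langle z,\xi\rangle}\,d\mu(z)=\bigl(\tfrac{t}{e}\bigr)^{t}e^{\Lambda_{\mu}(\xi)},$$
so $h_{Z_t^{+}(\mu)}(\xi)\ls(t/e)e^{\Lambda_{\mu}(\xi)/t}$. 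As it stands, this is useless when $\Lambda_{\mu}(\xi)$ is large, because the right-hand side then grows exponentially in $\Lambda_{\mu}(\xi)/t$ while the target $s+\Lambda_{\mu}(\xi)$ grows only linearly; bridging this gap is the main obstacle of the argument.

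The remedy is to apply the same estimate at the rescaled direction $\beta\xi$ and exploit positive homogeneity of support functions: $\beta\,h_{Z_t^{+}(\mu)}(\xi)=h_{Z_t^{+}(\mu)}(\beta\xi)\ls(t/e)\,e^{\Lambda_{\mu}(\beta\xi)/t}$. For $\beta\in(0,1]$, convexity of $\Lambda_{\mu}$ together with $\Lambda_{\mu}(0)=0$ gives $\Lambda_{\mu}(\beta\xi)\ls\beta\Lambda_{\mu}(\xi)$, hence
$$h_{Z_t^{+}(\mu)}(\xi)\ls\frac{t}{e\beta}\,e^{\beta\Lambda_{\mu}(\xi)/t}.$$
Writing $u=\Lambda_{\mu}(\xi)$ and choosing $\beta=\min(1,t/u)$ splits the analysis into two clean regimes. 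If $u\gr t$, the choice $\beta=t/u$ makes the exponent equal to $1$ and the prefactor equal to $u/e$, producing $h_{Z_t^{+}(\mu)}(\xi)\ls(u/e)\cdot e=u=\Lambda_{\mu}(\xi)$. If $u\ls t$, the choice $\beta=1$ gives $h_{Z_t^{+}(\mu)}(\xi)\ls(t/e)e^{u/t}\ls t$. In both cases one obtains $h_{Z_t^{+}(\mu)}(\xi)\ls\max(t,u)$.

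To conclude, using $s\gr t$ we get $\max(t,u)\ls s+u=s+\Lambda_{\mu}(\xi)\ls(1+\tfrac{2\ln s}{s})(s+\Lambda_{\mu}(\xi))$, which is exactly the pointwise inequality needed. The real work in the proof is the rescaling trick that turns the exponential bound from $a^{t}\ls(t/e)^{t}e^{a}$ into a linear bound in $\Lambda_{\mu}(\xi)$; after that, the comparison with $s+\Lambda_{\mu}(\xi)$ is immediate. Note that the argument in fact yields the strictly stronger inclusion $Z_t^{+}(\mu)\subseteq B_s(\mu)$ for all $s\gr t$, so the factor $(1+\tfrac{2\ln s}{s})$ stated in the proposition provides ample room for the method.
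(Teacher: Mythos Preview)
Your proof is correct and takes a genuinely different route from the paper. The paper fixes $v\in Z_t^{+}(\mu)$ and, for each direction $u$, bounds $\Lambda_{\mu}(u)$ \emph{from below} via the single term $e^{x}\gr x^{s}/s!$ of the exponential series; optimizing the scaling parameter $\delta$ then produces the constant $c_s=e(s!)^{1/s}/s$, whence the $1+\tfrac{2\ln s}{s}$ via Stirling. You work instead on the support-function side and bound $h_{Z_t^{+}(\mu)}(\xi)$ \emph{from above} using $a^{t}\ls(t/e)^{t}e^{a}$, then rescale $\xi$ and invoke convexity of $\Lambda_{\mu}$ to convert the exponential bound into the linear bound $h_{Z_t^{+}(\mu)}(\xi)\ls\max\bigl(t,\Lambda_{\mu}(\xi)\bigr)$. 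Your approach is at least as elementary and actually yields the sharper inclusion $Z_t^{+}(\mu)\subseteq B_t(\mu)$ with no multiplicative loss at all; the paper's factor $1+\tfrac{2\ln s}{s}$ is therefore an artefact of its method rather than of the truth. For the downstream applications in the paper this makes no difference (the factor is $1+o(1)$ anyway), but your argument gives a cleaner statement and avoids Stirling entirely.
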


\begin{proof}Let $v\in Z_t^+(\mu)$. We shall show that $\Lambda^{\ast }_{\mu }(v/c_s)\ls s$, where
$c_s\ls 1+\frac{2\ln s}{s}$. To this end, we should check that
\begin{equation*}\frac{1}{c_s}\langle u, v\rangle-\Lambda_\mu(u) \ls s\end{equation*}
for all $u\in{\mathbb R}^n$. For every $u\in{\mathbb R}^n$ we define $\beta_u$ by the equation
\begin{equation*}\int_{{\mathbb R}^n}\langle u, x\rangle_+^td\mu(x) = \beta_u^t.\end{equation*}
Then, $u/\beta_u\in M^+_t(\mu)$ and since $Z_t^+(\mu )=(M_t^+(\mu ))^{\circ }$
we have that $\langle u,v\rangle \ls \beta_u$. Note that, by H\"{o}lder's inequality, for all $s\gr t$ we have that
\begin{equation}\label{eq:1-1}\int_{{\mathbb R}^n}\langle u, x\rangle_+^sd\mu(x)\gr \left(\int_{{\mathbb R}^n}\langle u, x\rangle_+^td\mu(x)\right)^{s/t}=\beta_u^s.\end{equation}
For any $0<\delta<\beta_u$, using \eqref{eq:1-1} we see that
\begin{align*}\int_{{\mathbb R}^n} e^{\frac{\delta}{\beta_u}\langle u,x\rangle }d\mu(x) &\gr  \int_{{\mathbb R}^n} e^{\frac{\delta}{\beta_u}\langle
u,x\rangle}\mathds{1}_{\{x:\langle u,x\rangle \gr 0\}}(x)\,d\mu(x)
\gr \int_{{\mathbb R}^n} \frac{\delta^s\langle u, x\rangle_+^s}{s!\beta_u^s} \mathds{1}_{\{x:\langle
u,x\rangle \gr 0\}}(x)\,d\mu(x) \\
&=\int_{{\mathbb R}^n} \frac{\delta^s\langle u, x\rangle_+^s}{s!\beta_u^s}\,d\mu(x)
\gr\frac{\delta^s}{s!}.\end{align*}
So, $\Lambda_{\mu}(\delta u/\beta_u)\gr \ln\left(\frac{\delta^s}{s!}\right)$, and using the convexity of $\Lambda_{\mu}$
and the fact that $\Lambda_{\mu}(0)=0$ we obtain
\begin{equation*}\Lambda_{\mu}(u) \gr \frac{\beta_u}{\delta}\Lambda_{\mu}(\delta u/\beta_u)
\gr \beta_u\,\frac{1}{\delta}\ln\left(\frac{\delta^s}{s!}\right).\end{equation*} Therefore, for any $c>0$ we get
\begin{equation}\label{eq:1-3}\frac{1}{c}\langle u,v\rangle-\Lambda_{\mu}(u) \ls
\beta_u \left[\frac{1}{c}-\frac{1}{\delta}\ln\left(\frac{\delta^s}{s!}\right)\right].\end{equation}
The function $g_s(\delta)=\frac{1}{\delta}\ln\left(\frac{\delta^s}{s!}\right)$ attains its maximum value
at $\delta_s$ where $\delta_s$ satisfies $s=\ln\left(\frac{\delta^s}{s!}\right)$, i.e. $\delta_s=e(s!)^{1/s}$.
This maximum value is equal to $\max(g_s)=\frac{s}{\delta_s}=\frac{s}{e(s!)^{1/s}}$. So, if we choose $\delta=\delta_s$
and $c_s=\frac{1}{\max(g_s)}$, from \eqref{eq:1-3} we see that
\begin{equation}\label{eq:1-4}\frac{1}{c_s}\langle u,v\rangle-\Lambda_{\mu}(u)\ls 0\end{equation}
for all $u\in\mathbb{R}^n$ that satisfy $\beta_u>\delta_s$. On the other hand, since $\mu$ is centered, we have that
$\Lambda_{\mu}(u)\gr 0$ for all $u\in\mathbb{R}^n$, and hence, for all $u\in\mathbb{R}^n$ that satisfy $\beta_u\ls\delta_s$ we have that
\begin{equation}\label{eq:1-5}\frac{1}{c_s}\langle u,v\rangle -\Lambda_\mu(u) \ls \frac{1}{c_s}\beta_u\ls \frac{\delta_s}{c_s}=s.\end{equation}
This shows that
$$\Lambda_{\mu}^{\ast}(v/c_s)=\sup\left\{\frac{1}{c_s}\langle u,v\rangle -\Lambda_{\mu}(u):u\in\mathbb{R}^n\right\}
\ls s,$$
or equivalently $v\in c_sB_s$. We have thus proved that $Z_t^+\subseteq c_sB_s$ for all $s\gr t$ and it remains to
estimate the constant $c_s$. By Stirling's formula,
$$c_s=\frac{e}{s}\,(s!)^{1/s}\sim \frac{e}{s}\left(\frac{s}{e}\right)(2\pi s)^{\frac{1}{2s}}=(2\pi s)^{\frac{1}{2s}}\sim
1+\frac{\ln(2\pi s)}{2s}$$
as $s\to\infty$. It follows that if $s\gr t\gr t_0$ (where $t_0$ is a large enough constant) then $c_s\ls 1+\frac{2\ln s}{s}$.
\end{proof}

The next proposition establishes a reverse inclusion between the bodies $B_t(\mu )$ and $Z_s^+(\mu )$
(again, a variant of this result appears in \cite[Proposition~3.5]{Latala-Wojtaszczyk-2008}).

\begin{proposition}\label{prop:B<Z}Let $\mu $ be a centered log-concave probability measure on $\mathbb{R}^n$.
For any $t\gr 1$ and any $\delta\in (0,1)$ we have that
\begin{equation*}B_t(\mu) \subseteq (1+\delta)Z_{c_1t/\delta}^+(\mu)\end{equation*}
where $c_1>0$ is an absolute constant.
\end{proposition}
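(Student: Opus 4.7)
The plan is to fix $v \in B_t(\mu)$ and an arbitrary direction $u \in \mathbb{R}^n\setminus\{0\}$, set $A := h_{Z_s^+(\mu)}(u)$ with $s = c_1 t/\delta$, and to show $\langle u, v\rangle \leq (1+\delta)A$. Via the polar relation $Z_s^+(\mu) = (M_s^+(\mu))^\circ$ and the homogeneity of support functions, this is equivalent to $v \in (1+\delta) Z_s^+(\mu)$. The assumption $\Lambda_\mu^{\ast}(v) \leq t$, tested against the scaling $\lambda u$ for $\lambda > 0$, gives the family of inequalities
\begin{equation*}
\langle u, v\rangle \;\leq\; \frac{t + \Lambda_\mu(\lambda u)}{\lambda}, \qquad \lambda > 0,
\end{equation*}
so the problem reduces to a sharp upper bound on $\Lambda_\mu(\lambda u)$ for a well-chosen $\lambda$.

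For this I would use the pointwise bound $e^{\lambda \langle u,x\rangle} \leq e^{\lambda \langle u,x\rangle_+}$ (matching the one-sided body $Z_k^+$) and Taylor-expand:
\begin{equation*}
e^{\Lambda_\mu(\lambda u)} \;\leq\; \int_{\mathbb{R}^n} e^{\lambda \langle u,x\rangle_+}\,d\mu \;=\; \sum_{k=0}^{\infty} \frac{\lambda^k\, h_{Z_k^+(\mu)}(u)^k}{k!}.
\end{equation*}
I would split this sum at $k=s$. For $k \leq s$, monotonicity of $L^p$-norms on the probability space $(\mathbb{R}^n,\mu)$ gives $h_{Z_k^+(\mu)}(u) \leq h_{Z_s^+(\mu)}(u) = A$, so the head sum is at most $e^{\lambda A}$. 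For $k > s$, the regularity inequality \eqref{eq:regularity} between one-sided centroid bodies yields $h_{Z_k^+(\mu)}(u) \leq C(k/s)A$ with an absolute constant $C$. Combined with $k^k/k! \leq e^k$ from Stirling, the tail becomes geometric with ratio $\lambda Ce A/s$.

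Now take $\lambda = s/(2Ce A)$, so the common ratio of the tail geometric series is $1/2$; the tail then sums to an absolute constant and the head is bounded by $e^{s/(2Ce)}$. Hence $\Lambda_\mu(\lambda u) \leq s/(2Ce) + \ln 2$, and substituting back gives
\begin{equation*}
\langle u,v\rangle \;\leq\; \frac{2Ce\,A}{s}\Bigl(t + \frac{s}{2Ce} + \ln 2\Bigr) \;=\; A + \frac{2Ce\, A\, t}{s} + \frac{2Ce\, A \ln 2}{s}.
\end{equation*}
Choosing $s = c_1 t/\delta$ for a sufficiently large absolute constant $c_1$ (a fixed multiple of $Ce$) forces both correction terms to be at most $\delta A/2$, so $\langle u,v\rangle \leq (1+\delta)A$. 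The only delicate point is that the tail sum must converge with an absolute bound uniform in $u$; this is exactly what \eqref{eq:regularity} delivers, and the matching of scales $\lambda A \sim s/Ce \sim t/\delta$ is precisely what pins $s$ to be proportional to $t/\delta$ rather than to $t$ alone.
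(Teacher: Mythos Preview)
Your proof is correct and follows essentially the same approach as the paper: both Taylor-expand $e^{\lambda\langle u,x\rangle_+}$, split the sum at $k=s$, bound the head by $e^{\lambda A}$ via H\"older and the tail geometrically via the regularity inequality \eqref{eq:regularity}, then choose $\lambda\approx s/A$ so the tail ratio is $\tfrac12$. The only cosmetic differences are that the paper normalizes $u\in M_s^+(\mu)$ (so $A=1$) and argues by contrapositive, whereas you keep $A=h_{Z_s^+(\mu)}(u)$ general and argue directly that $\langle u,v\rangle\le(1+\delta)A$.
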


\begin{proof}Let $\delta\in (0,1)$ and $t\gr 1$. If $u\in M^+_s(\mu)$ then H\"{o}lder's inequality shows that
$\|\langle u,\cdot\rangle_+\|_k\ls \|\langle u,\cdot\rangle_+\|_s\ls 1$ for all $k\ls s$, and \eqref{eq:regularity}
implies that $\|\langle u,\cdot\rangle_+\|_k\ls \frac{ck}{s}\|\langle u,\cdot\rangle_+\|_s\ls \frac{ck}{s}$
for all $k>s$, where $c>0$ is an absolute constant. Since $\frac{k}{(k!)^{1/k}}\to e$, we may choose an absolute constant $\gamma>0$ small enough so that $\frac{c\gamma k}{(k!)^{1/k}}\ls\frac{1}{2}$ for all $k\gr 1$.  It follows that
\begin{align*}
\int_{{\mathbb R}^n} e^{\langle \gamma su,x\rangle_+}d\mu(x)
&=\sum_{k=0}^{\infty}\frac{1}{k!}\int_{{\mathbb R}^n}\langle \gamma su, x\rangle_+^kd\mu(x)
\ls \sum_{k\ls s}\frac{(\gamma s)^k}{k!}+\sum_{k>s}\frac{(\gamma s)^k}{k!}\left (\frac{ck}{s}\right )^k\\
&\ls e^{\gamma s}+\sum_{k>s}\frac{1}{2^k}\ls e^{\gamma s}+1\ls e^{\gamma s+1}\end{align*}
if $s\gr s_0$. Therefore, for any $u\in M^+_s(\mu)$ we get $\Lambda_{\mu}(\gamma su)\ls\gamma s+1$.

Now, let $v\notin (1+\delta)Z^+_s(\mu)$. We can find $u\in M^+_s(\mu)$ such that $\langle v,u\rangle> 1+\delta $ and then
\begin{equation*}\Lambda^{\ast }_{\mu }(v) \gr \langle v,\gamma su\rangle
-\Lambda_{\mu }(\gamma su)> (1+\delta)\gamma s -\gamma s-1=\delta\gamma s-1>t\end{equation*}
if we assume that $s\gr \frac{2t}{\gamma\delta}$. Therefore, $v\notin
B_t(\mu )$. This shows that $B_t(\mu)\subseteq (1+\delta)Z^+_{c_1t/\delta} (\mu)$, where $c_1=2/\gamma$. \end{proof}

\subsection{Level sets of the Cram\'{e}r transform and floating bodies}\label{subsection-2.3}

Let $\mu $ be a probability measure on ${\mathbb R}^n$. For any $x\in {\mathbb R}^n$ we denote by ${\cal H}(x)$ the set
of all closed half-spaces $H$ of ${\mathbb R}^n$ containing $x$. The function
$$\varphi_{\mu }(x)=\inf\{\mu (H):H\in {\cal H}(x)\}$$
is called Tukey's half-space depth. This notion was introduced by Tukey in \cite{Tukey-1975} for data sets as a
measure of centrality for multivariate data; see the survey article of Nagy, Sch\"{u}tt and Werner
\cite{Nagy-Schutt-Werner-2019} for an overview, connections
with convex geometry and references. Note that the infimum in the definition of $\varphi_{\mu}(x)$
is determined by those closed half-spaces $H$ for which $x$ lies on the boundary $\partial (H)$ of $H$.
For every $s\gr 1$ we define the set
$$T_s(\mu)=\{x\in \mathbb{R}^n:\varphi_{\mu}(x)\gr e^{-s}\}.$$
Note that $T_s(\mu)$ is convex: if $x,y\in T_s(\mu)$ then for any $z\in [x,y]$ and any $H\in {\cal H}(z)$ we
have that either $x$ or $y$ belongs to $H$, and hence $\mu(H)\gr e^{-s}$, therefore $\varphi_{\mu}(z)\gr e^{-s}$.

Let $x\in {\mathbb R}^n$. For any $\xi \in {\mathbb R}^n$ the half-space
$\{z:\langle z-x,\xi  \rangle \gr 0\}$ is in ${\cal H}(x)$, therefore
\begin{equation*}
\varphi_{\mu } (x) \ls \mu (\{z:\langle z,\xi  \rangle \gr \langle x,\xi \rangle\} )
\ls e^{-\langle x,\xi \rangle }{\mathbb E}_{\mu }\big(e^{\langle z,\xi \rangle }\big)=\exp \big(-[\langle x,\xi \rangle -\Lambda_{\mu }(\xi )]\big),\end{equation*}
and taking the infimum over all $\xi \in {\mathbb R}^n$ we see that
$\varphi_{\mu } (x)\ls\exp (-\Lambda_{\mu }^{\ast }(x))$. An immediate consequence of this inequality is the inclusion
\begin{equation}\label{eq:floating-1}T_s(\mu)\subseteq B_s(\mu).\end{equation}
Indeed, if $x\in T_s(\mu)$ then $e^{-s}\ls\varphi_{\mu}(x)\ls\exp (-\Lambda_{\mu }^{\ast }(x))$, which shows that
$\Lambda_{\mu}^{\ast}(x)\ls s$, and hence $x\in B_s(\mu)$.

The next proposition shows that if $\mu$ is log-concave then a reverse inclusion holds in full generality,
at least if $s$ is large enough.

\begin{proposition}\label{prop:floating-1}There exists $s_0\gr 1$ such that, for every centered log-concave
probability measure $\mu$ on $\mathbb{R}^n$ and any $s\gr s_0$,
$$T_s(\mu)\subseteq B_s(\mu)\subseteq T_{s+3\ln s}(\mu).$$
\end{proposition}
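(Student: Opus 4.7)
The first inclusion $T_s(\mu)\subseteq B_s(\mu)$ is the content of \eqref{eq:floating-1}, so I focus on the reverse inclusion $B_s(\mu)\subseteq T_{s+3\ln s}(\mu)$. Fix $x\in B_s(\mu)$ and a closed half-space $H=\{z:\langle z,\xi\rangle\geq h\}$ through $x$, where $h=\langle x,\xi\rangle$, and let $\nu$ be the pushforward of $\mu$ under $z\mapsto\langle z,\xi\rangle$, which is centered log-concave on $\mathbb{R}$ by Pr\'ekopa--Leindler. Then $\mu(H)=\nu([h,\infty))$ and
\begin{equation*}
\Lambda_\nu^{*}(h)=\sup_{\lambda\in\mathbb{R}}\!\big(\lambda h-\Lambda_\mu(\lambda\xi)\big)\leq \Lambda_\mu^{*}(x)\leq s,
\end{equation*}
so the proposition reduces to the one-dimensional claim: every centered log-concave probability measure $\nu$ on $\mathbb{R}$ with $\Lambda_\nu^{*}(h)\leq s$ (and $s$ sufficiently large) satisfies $\nu([h,\infty))\geq e^{-(s+3\ln s)}$. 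When $h\leq 0$ this follows from $\nu([0,\infty))\geq 1/e$ (a standard bound for centered one-dimensional log-concave measures), so from now on $h>0$.

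The natural tool is exponential tilting. Choose $\lambda_0\geq 0$ with $\Lambda_\nu'(\lambda_0)=h$ and set $d\tilde\nu(t)=e^{\lambda_0 t-\Lambda_\nu(\lambda_0)}\,d\nu(t)$. Then $\tilde\nu$ is log-concave with mean $h$ and variance $\sigma^{2}=\Lambda_\nu''(\lambda_0)$, and two direct computations give
\begin{equation*}
\nu([h,\infty))=e^{-s}\!\int_0^\infty\! e^{-\lambda_0 u}\,\tilde g(h+u)\,du \qquad\text{and}\qquad \mathbb{E}_{\tilde\nu}\,e^{-\lambda_0(Y-h)}=e^{s}.
\end{equation*}
Since $\tilde g$ is log-concave with mean $h$ and variance $\sigma^{2}$, a Fradelizi-type estimate gives $\tilde g(h+u)\geq c/\sigma$ on $u\in[0,c\sigma]$; truncating the first integral at $u=\min(\sigma,1/\lambda_0)$ yields
\begin{equation*}
\nu([h,\infty))\geq \frac{c\,e^{-s}}{1+\lambda_0\sigma}.
\end{equation*}

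The crux, and the principal obstacle, is then to show $\lambda_0\sigma\leq C(1+s)$ for an absolute constant $C$. I would apply Jensen's inequality to the $\{Y\leq h\}$ portion of the tilt identity: since $Y-h$ is centered log-concave under $\tilde\nu$, one has $\mathbb{P}_{\tilde\nu}(Y\leq h)\geq c_1$ (each side of the mean of a centered 1D log-concave measure carries mass at least $1/e$) and $\mathbb{E}_{\tilde\nu}|Y-h|\geq c_2\sigma$ (Khintchine-type moment comparison for log-concave densities), so $\mathbb{E}_{\tilde\nu}(h-Y)_{+}/\mathbb{P}_{\tilde\nu}(Y\leq h)\geq c_3\sigma$. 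Jensen then yields
\begin{equation*}
e^{s}\geq \mathbb{E}_{\tilde\nu}\!\left[e^{-\lambda_0(Y-h)}\mathbf{1}_{Y\leq h}\right]\geq \mathbb{P}_{\tilde\nu}(Y\leq h)\,\exp\!\left(\lambda_0\cdot\frac{\mathbb{E}_{\tilde\nu}(h-Y)_+}{\mathbb{P}_{\tilde\nu}(Y\leq h)}\right)\geq c_1\,e^{c_3\lambda_0\sigma},
\end{equation*}
which rearranges to $\lambda_0\sigma\leq C(1+s)$. Combining with the earlier display produces $\nu([h,\infty))\geq c'e^{-s}/(1+s)\geq e^{-(s+3\ln s)}$ for all $s$ beyond an absolute threshold $s_0$, completing the argument. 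Every other step is routine one-dimensional log-concave analysis; the only delicate input is the Jensen-based control of $\lambda_0\sigma$.
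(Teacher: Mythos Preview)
Your argument is correct, but it follows a genuinely different route from the paper's proof. The paper simply quotes an inequality of Brazitikos and Chasapis,
\[
\Lambda_{\mu}^{\ast}(x)\gr (1-\varepsilon)\ln\!\left(\tfrac{1}{\varphi_{\mu}(x)}\right)+\ln\!\left(\tfrac{\varepsilon}{2^{1-\varepsilon}}\right),
\]
assumes $\varphi_\mu(x)<e^{-s-3\ln s}$, optimizes in $\varepsilon$ (taking $\varepsilon=1/(s+2\ln s)$), and reaches a contradiction with $\Lambda_\mu^\ast(x)\ls s$. Your proof instead reduces to one dimension via the pushforward under $\langle\cdot,\xi\rangle$, applies exponential tilting at $\lambda_0=(\Lambda_\nu')^{-1}(h)$, and combines a pointwise lower bound on the tilted density with a Jensen argument bounding $\lambda_0\sigma\ls C(1+s)$. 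In effect you are re-deriving from scratch a quantitative form of the Brazitikos--Chasapis inequality; the payoff is a self-contained argument (and actually the slightly better constant $2\ln s$ in place of $3\ln s$), at the cost of more work. The paper's route is much shorter once the external inequality is granted.

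Two small points to tidy up. First, your two ``direct computations'' yield $e^{-\Lambda_\nu^\ast(h)}$ and $e^{\Lambda_\nu^\ast(h)}$, not $e^{-s}$ and $e^{s}$; since $\Lambda_\nu^\ast(h)\ls s$ the inequalities $\nu([h,\infty))\gr e^{-s}\int_0^\infty e^{-\lambda_0 u}\tilde g(h+u)\,du$ and $\mathbb{E}_{\tilde\nu}e^{-\lambda_0(Y-h)}\ls e^{s}$ are what you actually use, and both go the right way. Second, the existence of a finite $\lambda_0$ with $\Lambda_\nu'(\lambda_0)=h$ deserves a word: it holds because $\Lambda_\nu^\ast(h)<\infty$ forces $h$ to lie in the interior of the support of $\nu$, where the range of $\Lambda_\nu'$ (over the open domain of $\Lambda_\nu$) coincides with that interior for a non-degenerate log-concave measure. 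The Fradelizi-type density lower bound $\tilde g(h+u)\gr c/\sigma$ on $[0,c\sigma]$ is indeed standard for one-dimensional log-concave densities with prescribed mean and variance, so your invocation of it is fine.
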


\begin{proof}We shall use a result of Brazitikos and Chasapis from \cite{Brazitikos-Chasapis-2024}: For every $x\in {\rm supp}(\mu)$
and any $\varepsilon\in (0,1)$ we have that
$$\Lambda_{\mu}^{\ast}(x)\gr (1-\varepsilon)\ln\left(\frac{1}{\varphi_{\mu}(x)}\right)
+\ln\left(\frac{\varepsilon}{2^{1-\varepsilon}}\right)=\ln\left(\frac{\varepsilon}{(2\varphi_{\mu}(x))^{1-\varepsilon}}\right).$$
Let $x\in B_s(\mu)$ and assume that $\varphi_{\mu}(x)<e^{-s-3\ln s}$. Then,
$$2\varphi_{\mu}(x)\ls e^{-s-3\ln s+\ln 2}<e^{-s-2\ln s}$$
provided that $s\gr 2$, and hence
$$s\gr \Lambda_{\mu}^{\ast}(x)\gr \ln\left(\varepsilon e^{(1-\varepsilon)(s+2\ln s)}\right)
=\ln\varepsilon +(1-\varepsilon)(s+2\ln s)$$
for every $\varepsilon\in (0,1)$ and any $s\gr 2$. The function $f(\varepsilon)=\ln\varepsilon +(1-\varepsilon)(s+2\ln s)$
attains its maximum at $\varepsilon =\frac{1}{s+2\ln s}$ and we must have
$$s\gr -\ln(s+2\ln s)+\left(1-\frac{1}{s+2\ln s}\right)(s+2\ln s)=s+2\ln s-\ln(s+2\ln s)-1.$$
It follows that $\ln(s+2\ln s)+1\gr 2\ln s$, which implies that $e(s+2\ln s)\gr s^2$, a contradiction if
$s\gr s_0$ for a large enough absolute constant $s_0>0$.

We have thus shown that if $s\gr s_0$ and $x\in B_s(\mu)$ then $\varphi_{\mu}(x)\gr e^{-s-3\ln s}$, i.e.
$B_s(\mu)\subseteq T_{s+3\ln s}(\mu)$. The inclusion $T_s(\mu)\subseteq B_s(\mu)$ is \eqref{eq:floating-1} above.
\end{proof}

The next simple lemma compares the families $\{T_s(\mu)\}_{t>0}$ and $\{Z_t^+(\mu)\}_{t>0}$ of floating bodies and $L_t$-centroid bodies.

\begin{lemma}\label{lem:floating-2}Let $\mu$ be a centered log-concave probability measure on $\mathbb{R}^n$.
For any $\delta>0$ and any $t\gr 1$ we have that
$$T_{t\ln(1+\delta)}(\mu)\subseteq (1+\delta)Z_t^+(\mu).$$
\end{lemma}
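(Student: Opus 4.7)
The plan is to argue by contrapositive: assume $x\notin (1+\delta)Z_t^+(\mu)$, produce a half-space $H$ containing $x$ with $\mu(H)<(1+\delta)^{-t}$ via Markov's inequality, and conclude that $\varphi_{\mu}(x)<e^{-t\ln(1+\delta)}$, so that $x\notin T_{t\ln(1+\delta)}(\mu)$.

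Concretely, if $x\notin (1+\delta)Z_t^+(\mu)$ then $x/(1+\delta)\notin Z_t^+(\mu)$, and by the separation theorem (or by the definition of the support function) there exists $v\in\mathbb{R}^n$ such that
$$\frac{\langle x,v\rangle}{1+\delta}>h_{Z_t^+(\mu)}(v)=\left(\int_{\mathbb{R}^n}\langle v,y\rangle_+^t\,d\mu(y)\right)^{\!1/t}\gr 0.$$
In particular $a:=\langle x,v\rangle>0$ and $a^t>(1+\delta)^t\int_{\mathbb{R}^n}\langle v,y\rangle_+^t\,d\mu(y)$.

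Now let $H=\{y\in\mathbb{R}^n:\langle v,y-x\rangle\gr 0\}=\{y:\langle v,y\rangle\gr a\}$. Then $x\in\partial H\subseteq H$, so $H\in\mathcal{H}(x)$ and $\varphi_{\mu}(x)\ls\mu(H)$. Since $\langle v,y\rangle\gr a>0$ on $H$, we may apply Markov's inequality to $\langle v,y\rangle_+^t$:
$$\mu(H)\ls\mu\bigl(\{y:\langle v,y\rangle_+\gr a\}\bigr)\ls\frac{1}{a^t}\int_{\mathbb{R}^n}\langle v,y\rangle_+^t\,d\mu(y)<\frac{1}{(1+\delta)^t}=e^{-t\ln(1+\delta)}.$$
Therefore $\varphi_{\mu}(x)<e^{-t\ln(1+\delta)}$, which means $x\notin T_{t\ln(1+\delta)}(\mu)$, completing the contrapositive.

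There is no real obstacle here — the only small subtlety is ensuring $a>0$ so that the half-space $H$ is actually one witnessing the Tukey depth at $x$, and that Markov's inequality can be applied to the non-negative quantity $\langle v,y\rangle_+^t$; both are immediate from the strict inequality produced by separation from $Z_t^+(\mu)$. No log-concavity or centeredness of $\mu$ is actually used in the argument, which reflects the fact that this lemma is a purely quantitative comparison between the floating body and the one-sided centroid body via Markov's inequality (in the spirit of Paouris-type arguments).
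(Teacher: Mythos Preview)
Your proof is correct and follows essentially the same route as the paper: both argue by contrapositive, separate $x$ from $(1+\delta)Z_t^+(\mu)$ via the support function to obtain a direction, and then apply Markov's inequality to $\langle v,\cdot\rangle_+^t$ to bound the measure of the resulting half-space by $(1+\delta)^{-t}$. The paper phrases the strict inequality by introducing an auxiliary $\delta'>\delta$, whereas you carry the strict inequality directly; your closing remark that neither centeredness nor log-concavity is actually needed is also correct.
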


\begin{proof}Assume that $x\notin (1+\delta)Z_t^+(\mu)$. Then, we may find $\delta^{\prime}>\delta$ and $\xi\in S^{n-1}$ such that
$x\in H=\{y\in\mathbb{R}^n:\langle y,\xi\rangle \gr (1+\delta^{\prime})h_{Z_t^+}(\xi)\}$. From Markov's inequality we see
that
$$\mu(H)\ls (1+\delta^{\prime})^{-t} <(1+\delta)^{-t}=e^{-t\ln(1+\delta)},$$
therefore, $\varphi_{\mu}(x)\ls\mu(H)< e^{-t\ln(1+\delta)}$, which shows that $x\notin T_{t\ln(1+\delta)}(\mu)$.
The lemma follows.\end{proof}

\section{Moments of the Cram\'{e}r transform}\label{section-3}

In this section we prove Theorem~\ref{th:moments} and Theorem~\ref{th:small-moments}. We start with the observation that,
without loss of generality, we can restrict our attention to isotropic log-concave probability measures. Indeed, a simple computation
shows that if $\mu$ and $\nu$ are two centered log-concave probability measures and $\nu =T_{\ast}\mu$ for some $T\in GL(n)$
then $\Lambda_{\nu}(\xi)=\Lambda_{\mu}(T^t\xi)$ for all $\xi\in\mathbb{R}^n$, and hence, by the definition of the
Legendre transform, we have that
$$\Lambda_{\nu}^{\ast}(x)=\Lambda_{\mu}^{\ast}(T^{-1}x)$$
for all $x\in\mathbb{R}^n$. Then, from \eqref{eq:push-forward} we get
\begin{equation}\label{eq:push-forward-2}\int_{{\mathbb R}^n}g(\Lambda_{\nu}^{\ast}(x))d\nu (x)=\int_{{\mathbb R}^n}g(\Lambda_{\mu}^{\ast}(y))d\mu (y)\end{equation}
for every bounded Borel measurable function $g:{\mathbb R}^n\to {\mathbb R}$. Since every centered log-concave
probability measure $\mu$ on $\mathbb{R}^n$ has an isotropic push forward $\nu=T_{\ast}\mu$, where $T\in GL(n)$,
we may check the assertion of both theorems for $\nu$, and then it also holds true fror $\mu$.

We shall also use the next simple but useful lemma.

\begin{lemma}\label{lem:2}Let $\mu$ be a centered log-concave probability measure on ${\mathbb R}^n$. For
any $\delta>0$ and any Borel subset $A$ of $\mathbb{R}^n$ we have that
$$\mu((1+\delta)A)\ls e^{2n\delta }\mu(A).$$
\end{lemma}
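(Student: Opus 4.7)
The plan is to pass from the set inclusion statement to a pointwise comparison of the density $f_\mu$ at $x$ and at $(1+\delta)x$, and then apply Fradelizi's bound \eqref{eq:frad-2}.

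First I would perform the linear change of variables $x=(1+\delta)y$ in the integral defining $\mu((1+\delta)A)$ to get
$$\mu((1+\delta)A)=(1+\delta)^n\int_A f_\mu((1+\delta)y)\,dy.$$
The Jacobian factor $(1+\delta)^n$ is at most $e^{n\delta}$, so the task reduces to showing the pointwise estimate $f_\mu((1+\delta)y)\ls e^{n\delta}f_\mu(y)$ for every $y\in\mathbb{R}^n$.

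For the pointwise bound I would write $y$ as a convex combination of $0$ and $(1+\delta)y$, namely
$$y=\frac{\delta}{1+\delta}\cdot 0+\frac{1}{1+\delta}\cdot (1+\delta)y,$$
and invoke the log-concavity of $f_\mu$ to obtain
$$f_\mu(y)\gr f_\mu(0)^{\delta/(1+\delta)}f_\mu((1+\delta)y)^{1/(1+\delta)}.$$
Raising both sides to the power $1+\delta$ and rearranging gives
$$f_\mu((1+\delta)y)\ls f_\mu(y)\left(\frac{f_\mu(y)}{f_\mu(0)}\right)^{\!\delta}.$$
Since $\mu$ is centered and log-concave, Fradelizi's inequality \eqref{eq:frad-2} yields $f_\mu(y)\ls\|f_\mu\|_\infty\ls e^nf_\mu(0)$, so $\big(f_\mu(y)/f_\mu(0)\big)^\delta\ls e^{n\delta}$ and therefore $f_\mu((1+\delta)y)\ls e^{n\delta}f_\mu(y)$.

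Combining the two factors of $e^{n\delta}$ gives $\mu((1+\delta)A)\ls e^{2n\delta}\mu(A)$, as claimed. There is no real obstacle here: the only nontrivial input is Fradelizi's bound, which has already been recorded in the paper, and the trick of expressing $y$ as a convex combination of $0$ and $(1+\delta)y$ is exactly what turns log-concavity into the required multiplicative comparison.
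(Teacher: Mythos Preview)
Your proof is correct and essentially identical to the paper's own argument: the same change of variables, the same log-concavity comparison between $f_\mu(y)$ and $f_\mu((1+\delta)y)$ via the convex combination $y=\frac{\delta}{1+\delta}\cdot 0+\frac{1}{1+\delta}\cdot(1+\delta)y$, and the same appeal to Fradelizi's bound \eqref{eq:frad-2}.
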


\begin{proof}Note that if $A\subset {\mathbb R}^n$ is a Borel set, then
$$\mu((1+\delta)A)=\int_{(1+\delta)A}f_{\mu}(x)\,dx=(1+\delta)^n\int_Af_{\mu}((1+\delta)x)\,dx.$$
Since $f_{\mu}$ is log-concave, we see that
$$f_{\mu}((1+\delta)x)\ls f_{\mu}(x)\left(\frac{f_{\mu}(x)}{f_{\mu}(0)}\right)^{\delta}\ls e^{n\delta}f_{\mu}(x)$$
for every $x\in {\mathbb R}^n$, because $f_{\mu}(x)\ls e^nf_{\mu}(0)$ by \eqref{eq:frad-2}. It follows that
\begin{equation*}\mu((1+\delta)A)\ls (1+\delta)^ne^{n\delta}\mu(A)\ls e^{2n\delta}\mu(A)\end{equation*}
as claimed. \end{proof}

A weak integrability result can be obtained if we combine Proposition~\ref{prop:1} with the next technical proposition (see \cite[Proposition~5.6]{BGP-depth} for a proof).

\begin{proposition}\label{prop:3-weak}Let $\mu $ be an isotropic log-concave probability measure on ${\mathbb R}^n$. For any $\delta\in (0,1)$
and any $t\gr C_{\delta }n\ln n$ we have that $$\mu ((1+\delta )Z_t^+(\mu ))\gr 1-e^{-c\delta t}$$
where $C_{\delta }=C\delta^{-1}\ln\left(2/\delta\right)$ and $C,c>0$ are absolute positive constants.
\end{proposition}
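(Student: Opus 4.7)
The plan is to estimate the complementary probability $\mu(\mathbb{R}^n\setminus(1+\delta)Z_t^+(\mu))$ directly, by combining a direction-wise Markov bound with an $\eta$-net argument over the boundary of the polar body $(Z_t^+(\mu))^\circ$.

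For any $\xi$ with $h_{Z_t^+(\mu)}(\xi)=1$, the defining identity $\int_{{\mathbb R}^n}\langle x,\xi\rangle_+^t\,d\mu(x)=1$ combined with Markov's inequality gives
\begin{equation*}
\mu(\{x:\langle x,\xi\rangle>1+\delta'\})\ls(1+\delta')^{-t}\ls e^{-c\delta' t}
\end{equation*}
for $\delta'\in(0,1)$. Since $x\notin(1+\delta)Z_t^+(\mu)$ is equivalent to the existence of $\xi\in\partial(Z_t^+(\mu))^\circ$ with $\langle x,\xi\rangle>1+\delta$, we need to promote this pointwise bound to a uniform one by union bound over a net. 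Using isotropicity of $\mu$, the inequality $h_{Z_t^+(\mu)}(\xi)\gr\mathbb{E}_\mu|\langle X,\xi\rangle|/2\gr c$ (a Khinchine-type bound for log-concave marginals) yields $Z_t^+(\mu)\supseteq cB_2^n$, so $(Z_t^+(\mu))^\circ$ has Euclidean diameter bounded by $c^{-1}$. Consequently there is an $\eta$-net $N\subset\partial(Z_t^+(\mu))^\circ$ with $|N|\ls(c_0/\eta)^n$, and for any $\xi^*\in\partial(Z_t^+(\mu))^\circ$ and a nearest $\theta\in N$, $\langle x,\theta\rangle\gr\langle x,\xi^*\rangle-\eta|x|$.

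To control $|X|$, I would invoke the exponential tail $\mu(\{|X|>R\})\ls e^{-c_1R}$ valid for $R$ above order $n$ (which follows from Proposition~\ref{prop:r-2} together with the estimate \eqref{eq:A-B}, and more sharply from Paouris's deviation inequality). Setting $R:=c_2\delta t$ and $\eta:=\delta/(2R)$ so that $\eta|X|\ls\delta/2$ on the event $\{|X|\ls R\}$, the Markov bound applied to each $\theta\in N$ together with the union bound and the tail produce
\begin{equation*}
\mu\bigl(\mathbb{R}^n\setminus(1+\delta)Z_t^+(\mu)\bigr)\ls (c_3 t)^n e^{-c\delta t/2}+e^{-c_4\delta t}.
\end{equation*}
The first term is $\ls e^{-c\delta t/4}$ provided $n\ln(c_3 t)\ls c\delta t/4$, which holds for $t\gr C\delta^{-1}\ln(2/\delta)n\ln n$ (the factor $\ln(2/\delta)$ absorbs the logarithmic losses from the $\delta/2$ and $\delta/4$ splittings as well as from the range constraint on the tail bound).

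The main obstacle is the delicate balance between the net fineness $\eta$ and the tail cutoff $R$: too fine a net explodes the entropy $(c_0/\eta)^n$, while too large an $R$ forces a finer $\eta$. This tradeoff, coupled with the quality of the log-concave tail, is what forces the threshold to be of the form $C_\delta n\ln n$ rather than merely $C_\delta n$, and is the technical heart of the argument.
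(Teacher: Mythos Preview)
The paper does not prove this proposition here; it is quoted from \cite[Proposition~5.6]{BGP-depth} and used as a black box, so there is no in-paper argument to compare against directly. Your sketch---Markov's inequality in each direction $\xi\in\partial(Z_t^+(\mu))^\circ$, a Euclidean $\eta$-net on that boundary (which lies in a ball of absolute-constant radius since $Z_t^+(\mu)\supseteq cB_2^n$ for isotropic $\mu$), and a truncation $\{|X|\ls R\}$ with $R\approx\delta t$ and $\eta\approx 1/t$---is the standard approach to such estimates and is correct in outline. The resulting entropy cost $(ct)^n$ against the Markov gain $e^{-c\delta t}$ forces exactly $t\gtrsim\delta^{-1}n\ln t$, hence $t\gtrsim\delta^{-1}n\ln n$, with the extra $\ln(2/\delta)$ in $C_\delta$ absorbing the $\ln(1/\delta)$ contribution to $\ln t$.

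One minor point: deriving the tail $\mu(\{|X|>R\})\ls e^{-cR}$ from Proposition~\ref{prop:r-2} together with \eqref{eq:A-B} is not immediate, since the constants $A,B$ in \eqref{eq:A-B} are not asserted to be uniform over isotropic $\mu$; invoking Paouris's large-deviation inequality directly, as you also suggest, is the clean route. It is worth noting that the paper later proves the sharper Proposition~\ref{prop:3} (with $C_\delta\approx 1/\delta$, no logarithmic loss) by a completely different method, going through the bodies $R_t(\mu)$, $B_t(\mu)$ and Lemma~\ref{lem:level-B}; your net argument is more elementary and self-contained but pays the extra $\ln(2/\delta)$ factor.
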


\begin{theorem}\label{th:moments-weak}Let $\mu$ be a centered log-concave probability measure on ${\mathbb R}^n$.
For every $0<p<1$ we have that
$$\int_{{\mathbb R}^n}|\Lambda_{\mu}^{\ast }(x)|^p\,d\mu(x)<\infty .$$
\end{theorem}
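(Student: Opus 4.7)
The plan is to apply the layer-cake formula
$$\int_{{\mathbb R}^n}|\Lambda_{\mu}^{\ast}(x)|^p\,d\mu(x)=\int_0^{\infty}pr^{p-1}(1-\mu(B_r(\mu)))\,dr,$$
after the standard push-forward reduction \eqref{eq:push-forward-2} to the isotropic case, and then to bound $1-\mu(B_r(\mu))$ for $r$ large via the two comparison results available for the one-sided centroid bodies. First I would chain Proposition~\ref{prop:1} and Proposition~\ref{prop:3-weak}: the former, with $s=t$, gives $Z_t^+(\mu)\subseteq (1+\frac{2\ln t}{t})B_t(\mu)$ for $t\gr t_0$, while the latter gives $\mu((1+\delta)Z_t^+(\mu))\gr 1-e^{-c\delta t}$ provided $t\gr C_\delta n\ln n$. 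Setting $\gamma_t:=(1+\delta)(1+\frac{2\ln t}{t})-1$, these two combine into $\mu((1+\gamma_t)B_t(\mu))\gr 1-e^{-c\delta t}$.

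The second step is to transfer the measure estimate from the mild dilate $(1+\gamma_t)B_t(\mu)$ back to $B_t(\mu)$ itself via Lemma~\ref{lem:2}, which yields
$$1-\mu(B_t(\mu))\ls 1-e^{-2n\gamma_t}(1-e^{-c\delta t})\ls 2n\gamma_t+e^{-c\delta t}.$$
The remaining task is to choose $\delta=\delta(t)$ so that the right-hand side is integrable against $pr^{p-1}$ at infinity. A workable choice is $\delta=Kn\ln n\,\ln t/t$, with $K$ a sufficiently large absolute constant: an easy check shows that this respects the constraint $t\gr C_\delta n\ln n$ of Proposition~\ref{prop:3-weak} once $t$ exceeds a threshold $t_1(n)$ that depends only on $n$, and it produces
$$1-\mu(B_t(\mu))\ls \frac{Cn^2\ln n\,\ln t}{t}+t^{-cKn\ln n}$$
for all $t\gr t_1(n)$.

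Splitting the layer-cake integral at $t_1(n)$, the contribution of $[0,t_1(n)]$ is at most $t_1(n)^p<\infty$; the contribution of $[t_1(n),\infty)$ is controlled by $C(n)\int_{t_1(n)}^\infty\ln r\cdot r^{p-2}\,dr$ plus the even smaller contribution coming from the term $t^{-cKn\ln n}$. The first integral converges precisely when $2-p>1$, i.e. when $p<1$, which is exactly the hypothesis, and this completes the proof.

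The main obstacle, and the reason the present argument halts at $p<1$, is that the dilation $\gamma_t$ forced upon us by the chain $Z_t^+(\mu)\subseteq(1+\frac{2\ln t}{t})B_t(\mu)$ together with Lemma~\ref{lem:2} only shrinks like $(\ln t)/t$, so $1-\mu(B_t(\mu))$ decays at best polynomially in $t$. Breaking the $p<1$ barrier and reaching the exponential integrability of Theorem~\ref{th:moments} should require bypassing this centroid-body detour and working directly with the density level sets $R_t(\mu)$ via the inclusion \eqref{eq:intro-R-B} and the sharp estimate of Proposition~\ref{prop:r-2}.
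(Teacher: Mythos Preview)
Your proposal is correct and follows essentially the same route as the paper: reduce to the isotropic case, chain Proposition~\ref{prop:1} with Proposition~\ref{prop:3-weak}, pass from the dilate back to $B_t(\mu)$ via Lemma~\ref{lem:2}, and finish with the layer-cake formula. The only cosmetic difference is the choice of $\delta(t)$: the paper introduces an auxiliary exponent $q\in(p,1)$ and takes $\delta=\frac{\ln t}{ct^q}$, obtaining $1-\mu(B_t(\mu))\lesssim n\,t^{-q}\ln t$ and then integrating $t^{p-1-q}\ln t$; your choice $\delta=Kn\ln n\,\ln t/t$ corresponds roughly to $q=1$ with an $n$-dependent constant and leads to the same convergent integral $\int t^{p-2}\ln t\,dt$ for $p<1$.
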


\begin{proof}We may assume that $\mu$ is isotropic. Let $0<p<q<1$. Let $t>1$ that will be chosen appropriately large
and apply Proposition~\ref{prop:3-weak} with $\delta =\frac{\ln t}{ct^q}$. Then, we get
\begin{equation}\label{eq:moments-1}\mu \left(\left(1+\frac{\ln t}{ct^q}\right)Z_t^+(\mu )\right)\gr 1-e^{-t^{1-q}\ln t}\end{equation}
provided that $t\gr C_1(q)t^qn\ln n$, or equivalently $t\gr t_n=C_2(q)(n\ln n)^{\frac{1}{1-q}}$.
Now, let $t\gr t_n$. From Proposition~\ref{prop:1} we see that
$$ \left(1+\frac{\ln t}{ct^q}\right)Z_t^+(\mu )\subseteq \left(1+\frac{\ln t}{ct^q}\right)
\left(1+\frac{\ln t}{t}\right)B_t(\mu)$$
and applying Lemma~\ref{lem:2} twice we obtain
\begin{align}\label{eq:moments-2}\mu \left(\left(1+\frac{\ln t}{ct^q}\right)Z_t^+(\mu )\right)
&\ls \exp\left(\frac{2n\ln t}{ct^q}+\frac{2n\ln t}{t}\right)\mu(B_t(\mu))\ls \exp\left(\frac{4n\ln t}{ct^q}\right)\mu(B_t(\mu))\\
\nonumber &\ls \left(1-\frac{4n\ln t}{ct^q}\right)^{-1}\mu(B_t(\mu)).\end{align}
From \eqref{eq:moments-1} and \eqref{eq:moments-2} we get
$$\mu(B_t(\mu))\gr \left(1-e^{-t^{1-q}\ln t}\right)\left(1-\frac{4n\ln t}{ct^q}\right)\gr
1-\frac{8n\ln t}{ct^q}$$
if $t\gr s_n$ where $s_n\gr t_n$ is large enough and depends only on $n$ (note that
$\exp(t^{1-q}\ln t)\gr ct^q/(4n\ln t)$ for large enough $t$, independently from $n$).
Now, we write
\begin{align*}\int_{\mathbb{R}^n}|\Lambda_{\mu}^{\ast}(x)|^pd\mu(x)
&=\int_0^{\infty}pt^{p-1}\mu(\{x:\Lambda_{\mu}^{\ast}(x)>t\})\,dt =\int_0^{\infty}pt^{p-1}(1-\mu(B_t(\mu)))\,dt\\
&=\int_0^{s_n}pt^{p-1}(1-\mu(B_t(\mu)))\,dt+\int_{s_n}^{\infty}pt^{p-1}(1-\mu(B_t(\mu)))\,dt\\
&\ls \int_0^{s_n}pt^{p-1}\,dt+\frac{8np}{c}\int_{s_n}^{\infty}t^{p-1}\frac{\ln t}{t^q}\,dt\\
&=s_n^p+\frac{8np}{c}\int_{s_n}^{\infty}\frac{\ln t}{t^{1+q-p}}dt<+\infty\end{align*}
where the last integral converges because $1+q-p>1$.
\end{proof}

We have already described in the introduction that the main ingredient for the stronger integrability
estimate of Theorem~\ref{th:moments} is a lemma which compares the families $\{R_t(\mu)\}_{t>0}$
and $\{B_t(\mu)\}_{t>0}$. This is the content of Lemma~\ref{lem:level-B} below, which is then combined
with Proposition~\ref{prop:r-2}. For the proof of Lemma~\ref{lem:level-B} we need a technical fact; if $\mu$ is
isotropic and $t$ is large enough then $R_t(\mu)$ contains a constant multiple of the Euclidean unit ball.
The proof of the next lemma is essentially contained in \cite[Lemma~5.4]{Klartag-2007clt}. 

\begin{lemma}\label{lem:gamma-ball}Let $\mu$ be an isotropic log-concave probability measure on ${\mathbb R}^n$.
For any $t\gr 20n$ we have that
$$R_t(\mu)\supseteq  \frac{1}{3}B_2^n.$$
\end{lemma}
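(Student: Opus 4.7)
The plan is to combine the near-full-measure bound of Proposition~\ref{prop:r-2} with a short separation argument that exploits the isotropy of $\mu$. Observe first that $R_t(\mu)$ is closed, convex, contains the origin (since $f_{\mu}(0)\gr e^{-t}f_{\mu}(0)$), and is nondecreasing in $t$; hence it suffices to treat $t=20n$. For this value Proposition~\ref{prop:r-2} applies (since $20n\gr 5(n-1)$) and yields $\mu(R_{20n}(\mu))\gr 1-e^{-5n}\gr 1-e^{-5}$.

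I would then argue by contradiction. Suppose $R_{20n}(\mu)\not\supseteq c_0 B_2^n$ for an absolute $c_0>0$ to be fixed below, and pick $y\in c_0 B_2^n\setminus R_{20n}(\mu)$. Since $R_{20n}(\mu)$ is closed, convex, and contains $0$, Hahn-Banach furnishes $\xi\in S^{n-1}$ and $\beta\in[0,c_0)$ with $R_{20n}(\mu)\subseteq \{x\in\mathbb{R}^n:\langle x,\xi\rangle\ls\beta\}$, and therefore
$$\mu(R_{20n}(\mu))\ls 1-\mu\bigl(\{x:\langle x,\xi\rangle>c_0\}\bigr).$$
By isotropy, $Y=\langle X,\xi\rangle$ is a centered log-concave scalar with $\mathrm{Var}(Y)=1$. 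Fradelizi's Gr\"unbaum-type inequality gives $\mathbb{P}(Y\gr 0)\gr 1/e$, and a classical bound of Bobkov states that any centered log-concave density on $\mathbb{R}$ with unit variance satisfies $\|\pi\|_{\infty}\ls M$ for some absolute constant $M$. Consequently $\mathbb{P}(Y\in [0,c_0])\ls Mc_0$, and picking $c_0:=1/(2eM)$ produces $\mathbb{P}(Y>c_0)\gr 1/e-1/(2e)=1/(2e)$. Hence $\mu(R_{20n}(\mu))\ls 1-1/(2e)$, which contradicts $\mu(R_{20n}(\mu))\gr 1-e^{-5}$ because $e^{-5}<1/(2e)$.

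The argument is therefore soft: it avoids any explicit comparison with the bodies $K_t(\mu)$ or $Z_t^+(\mu)$, and the only inputs beyond Proposition~\ref{prop:r-2} are the two classical one-dimensional facts for centered log-concave measures (the Gr\"unbaum-Fradelizi half-space bound and the universal $L^{\infty}$ bound on densities of unit variance). No heavy tool such as the Klartag-Lehec bound on $L_n$ is needed; the only verification required is the trivial numerical inequality $e^{-5n}<1/(2e)$ for all $n\gr 1$. The main conceptual point—and the only obstacle worth flagging—is recognising that at the scale $t=20n$ Proposition~\ref{prop:r-2} already drives $\mu(R_t(\mu))$ so close to $1$ that it becomes incompatible with $R_t(\mu)$ being contained in any half-space through the origin of $\mu$-measure at most $1-1/(2e)$.
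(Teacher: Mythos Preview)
Your proof is correct and takes a genuinely different route from the paper's. The paper argues constructively: it uses $Z_2(\mu)=B_2^n$ for isotropic $\mu$, then invokes inclusions from the BGVV machinery to obtain $K_{2n}(\mu)\supseteq cZ_n(\mu)\supseteq cZ_2(\mu)=cB_2^n$, and finally applies Proposition~\ref{prop:r-4} (with $\alpha=5$) together with \eqref{eq:inclusions-Kp} to pass from $K_{2n}(\mu)$ to $R_t(\mu)$ for $t\gr 20n$. Your argument is instead a soft contradiction: you feed Proposition~\ref{prop:r-2} directly to get $\mu(R_{20n}(\mu))\gr 1-e^{-5}$, and then observe that a convex set carrying this much mass cannot be contained in a half-space $\{\langle\cdot,\xi\rangle\ls c_0\}$, because isotropy together with the one-dimensional Gr\"unbaum--Fradelizi bound $\mathbb{P}(\langle X,\xi\rangle\gr 0)\gr 1/e$ and the universal sup-norm bound on log-concave densities of unit variance force $\mu(\{\langle\cdot,\xi\rangle>c_0\})\gr 1/(2e)$ for a suitably small absolute $c_0$.

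Your approach is more self-contained---it bypasses the families $K_t(\mu)$ and $Z_t^+(\mu)$ entirely and makes transparent that the lemma is really a measure-concentration statement. The paper's route, by contrast, fits into its broader programme of comparing the families $R_t,K_t,Z_t^+,B_t,T_t$ and produces a more explicitly traceable constant. One small technical remark: you should not insist that $R_{20n}(\mu)$ is closed, since the paper does not claim this (and a log-concave density need not be upper semicontinuous without choosing a particular representative); however, weak separation already yields $R_{20n}(\mu)\subseteq\{\langle\cdot,\xi\rangle\ls c_0\}$, which is all your argument actually uses, so nothing is lost.
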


\begin{proof}Let $r:=r(R_t(\mu))$ denote the inradius of $R_t(\mu)$ with respect to the origin. Then there exists $\xi\in S^{n-1}$ such that
$h_{R_t(\mu)}(\xi)\ls r$, and consequently
$$R_t(\mu)\subseteq \{x\in\mathbb{R}^n:\langle x,\xi\rangle \ls r\}.$$
We decompose this half-space as
$$\{x\in\mathbb{R}^n:\langle x,\xi\rangle \ls r\}=\{x\in\mathbb{R}^n:\langle x,\xi\rangle \ls 0\}\cup \{x\in\mathbb{R}^n:0<\langle x,\xi\rangle \ls r\}.$$
Since $\mu$ is centered, Gr\"{u}nbaum's lemma (see~\cite[Lemma~2.2.6]{BGVV-book}) yields 
$$\mu(\{x\in\mathbb{R}^n:\langle x,\xi\rangle\ls 0\})\ls 1-\frac{1}{e}.$$
Let $F_{\xi}=\{t\xi:t\in\mathbb{R}\}$ be the one-dimensional subspace spanned by $\xi$.
Since $\mu$ is isotropic, the one-dimensional marginal $$g_{\xi}(t)=(\pi_{F_{\xi}}(f))(t)=\int_{F_{\xi}^{\perp}}f(y+t\xi )\,dy$$
is an isotropic log-concave density on $\mathbb{R}$ (see~\cite[Proposition~5.1.11]{BGVV-book}). Consequently,
\begin{equation*}\|g_{\xi}\|_{\infty}=L_{g_{\xi}}\ls 1.\end{equation*}
Therefore,
$$\mu(\{x\in\mathbb{R}^n:0<\langle x,\xi\rangle\ls r\})=\int_0^r\left(\int_{F_{\xi}^{\perp}}f(y+t\xi)\,dy\right)\,dt=\int_0^rg_{\xi}(t)\,dt\ls r.$$
Combining these estimates and using Proposition~\ref{prop:r-2}, we obtain
$$1-e^{-t/4}\ls \mu(R_t(\mu))\ls 1-\frac{1}{e}+r.$$
This implies 
$$r\gr e^{-1}-e^{-t/5}\gr e^{-1}-e^{-4}\gr 1/3,$$
as claimed.\end{proof}

We are now ready to compare $\{R_t(\mu)\}_{t>0}$ and $\{T_t(\mu)\}_{t>0}$.

\begin{lemma}\label{lem:level-B}Let $\mu$ be an isotropic log-concave probability measure on ${\mathbb R}^n$. For
any $\delta\in (0,1)$ and $t\gr n\ln n$ we have that
$$(1-\delta)R_t(\mu)\subseteq T_{g(t,\delta)}$$
where $g(t,\delta)\ls 2t+n\ln(1/\delta)$. In particular, if $\delta =e^{-t/2}$ we get
$$(1-e^{-t/2})R_t(\mu)\subseteq T_{nt}(\mu).$$
Note that the same inclusions hold with $T_{g(t,\delta)}$ replaced by $B_{g(t,\delta)}$, by~\eqref{eq:floating-1}.
\end{lemma}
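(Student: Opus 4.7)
The plan is to exploit Lemma~\ref{lem:gamma-ball}, which says $R_t(\mu)$ contains a Euclidean ball $c_0B_2^n$ (valid once $t\gr 20n$, hence for $t\gr n\ln n$ once $n$ is past an absolute constant; the small-$n$ case is absorbed into constants). Combining this with the convexity of $R_t(\mu)$ and the hypothesis $x\in R_t(\mu)$, the set $(1-\delta)x+\delta c_0B_2^n$ — namely the Euclidean ball $B((1-\delta)x,c_0\delta)$ — lies entirely in $R_t(\mu)$, on which $f_\mu\gr e^{-t}f_\mu(0)$ by definition.

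Using this ball as an integration window, for any $\xi\in\mathbb{R}^n$ I would write
$$e^{\Lambda_{\mu}(\xi)}\gr \int_{B((1-\delta)x,c_0\delta)}e^{\langle y,\xi\rangle}f_\mu(y)\,dy \gr e^{-t}f_\mu(0)\cdot e^{(1-\delta)\langle x,\xi\rangle}\cdot \int_{c_0\delta B_2^n}e^{\langle z,\xi\rangle}\,dz,$$
where the final integral is at least $\omega_n(c_0\delta)^n$ by the symmetry of $B_2^n$ and the bound $\cosh\gr 1$. Taking logarithms, rearranging, and passing to the supremum over $\xi$ yields
$$\Lambda_{\mu}^{\ast}((1-\delta)x)\ls t+n\ln(1/\delta)-\ln\big(\omega_n f_\mu(0)\big)+n\ln(1/c_0).$$

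To finish, I would estimate the additive error. Fradelizi's inequality~\eqref{eq:frad-2}, combined with the classical universal lower bound on the isotropic constant, gives $f_\mu(0)^{1/n}\gr c_1>0$; and Stirling gives $\omega_n^{1/n}\gr c_2/\sqrt{n}$. Hence the error term is at most $(n/2)\ln n+Cn$. Under the hypothesis $t\gr n\ln n$, this additional piece is dominated by a multiple of $t$, and (after absorbing absolute constants) one arrives at $g(t,\delta)\ls 2t+n\ln(1/\delta)$. The ``in particular'' case then follows by setting $\delta=e^{-t/2}$: then $n\ln(1/\delta)=nt/2$, so $g(t,e^{-t/2})\ls 2t+nt/2 \ls nt$ for $n\gr 4$.

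The main difficulty is essentially bookkeeping of constants. The $\omega_n^{-1}$ factor carries the $(n/2)\ln n$ error that pins down the threshold $t\gr n\ln n$ in the hypothesis; additional absolute constants absorbed from $c_0$ and from the universal lower bound on $f_\mu(0)^{1/n}$ must all be swept cleanly into the coefficient of $t$, which requires that the threshold $t\gr n\ln n$ be sharp enough to dominate every $O(n)$ and $O(n\ln n)$ remainder coming from the volume of the inscribed Euclidean ball.
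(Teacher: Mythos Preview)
Your argument is correct and follows essentially the same route as the paper's: both use Lemma~\ref{lem:gamma-ball} and the convexity of $R_t(\mu)$ to place a Euclidean ball $B((1-\delta)x,c_0\delta)$ inside $R_t(\mu)$, and then control the additive error via $f_\mu(0)^{1/n}\gr c_1$ and $\omega_n^{1/n}\approx n^{-1/2}$, which produces exactly the $\frac{n}{2}\ln n+O(n)$ term absorbed by the hypothesis $t\gr n\ln n$. The only cosmetic difference is that the paper bounds the half-space depth $\varphi_\mu$ from below using the half-ball and then invokes $\varphi_\mu\ls e^{-\Lambda_\mu^{\ast}}$, whereas you bound $e^{\Lambda_\mu(\xi)}$ directly by integrating over the full ball --- a slightly more self-contained variant leading to the same $g(t,\delta)$.
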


\begin{proof}From Lemma~\ref{lem:gamma-ball} we know that $R_t(\mu)\supseteq \frac{1}{3}B_2^n$. Let $x\in (1-\delta)R_t(\mu)$.
Since $(\delta/3)B_2^n\subseteq \delta R_t(\mu)$ we have that
$$B(x,\delta/3):=x+(\delta/3)B_2^n\subseteq (1-\delta)R_t(\mu)+\delta R_t(\mu)=R_t(\mu).$$
Then, if $H$ is a closed half-space such that $x\in\partial(H)$ we have that $H\cap B(x,\delta/3)\subseteq R_t(\mu)$,
and hence $f_{\mu}(y)\gr e^{-t}f_{\mu}(0)$ for all $y\in H\cap B(x,\delta/3)$. Since $H\cap B(x,\delta/3)$
is half of a ball of radius $\delta/3$, it follows that
$$\mu(H)\gr \int_{H\cap B(x,\delta/3)}f_{\mu}(y)\,d\mu(y)\gr e^{-t}f_{\mu}(0)\,|H\cap B(x,\delta/3)|
=e^{-t}f_{\mu}(0)\,\frac{\omega_n}{2}(\delta/3)^n.$$
Now, recall that $f_{\mu}(0)\gr e^{-n}\|f_{\mu}\|_{\infty}=e^{-n}L_{\mu}^n\gr (c_1/e)^n$ because $L_{\mu}\gr c_1$
for some absolute constant $c_1>0$, and $\frac{\omega_n}{2}\gr \left(\frac{c_2}{n}\right)^{n/2}$ for some absolute
constant $c_2>0$. Since $H$ was arbitrary, by the definition of $\varphi_{\mu}$ we get
$$\varphi_{\mu}(x)\gr e^{-t}\left(\frac{c_3}{n}\right)^{n/2}\delta^n=e^{-g(t,\delta)},$$
with $c_3=c_1^2c_2/(9e^2)$, where
\begin{equation}\label{eq:delta}g(t,\delta)=t+\frac{n}{2}\ln(c_4n)+n\ln(1/\delta)\end{equation}
with $c_4=1/c_3$. Since $t\gr n\ln n$, we see that $\frac{n}{2}\ln(c_4n)\ls t$ and the first assertion of the lemma follows.
For the second assertion, note that $g(t,e^{-t/2})\ls 2t+\frac{n}{2}t=\frac{n+4}{2}t\ls nt$ if $n\gr 4$.
\end{proof}

Lemma~\ref{lem:level-B} shows that if $t\gr n\ln n$ then $R_t(\mu)\subseteq \frac{1}{1-\delta}B_{g(t,\delta)}(\mu)
\subseteq (1+2\delta)B_{g(t,\delta)}(\mu)$ provided that $0<\delta \ls 1/2$. Using also Lemma~\ref{lem:2} we can
prove Theorem~\ref{th:moments}.

\begin{proof}[Proof of Theorem~$\ref{th:moments}$]As we observed in the beginning of this section,
we may assume that $\mu$ is isotropic. Let $t_n:=n\ln n$. From Lemma~\ref{lem:level-B} and Proposition~\ref{prop:floating-1} we know that
if $t\gr t_n$ then
$$R_t(\mu)\subseteq (1+2e^{-t/2})T_{nt}(\mu)\subseteq (1+2e^{-t/2})B_{nt}(\mu).$$
Moreover, Proposition~\ref{prop:r-2} shows that $\mu(R_t(\mu ))\gr 1-e^{-t/4}$. Combining these facts
with Lemma~\ref{lem:2} we get
$$1-e^{-t/4}\ls \mu(R_t(\mu)) \ls \exp\left(4n\,e^{-t/2}\right)\mu(B_{nt}(\mu))\ls (1+8ne^{-t/2})\mu(B_{nt}(\mu)),$$
which finally gives
$$\mu(B_{nt}(\mu))\gr 1-e^{-t/8}$$
if $n\gr n_0$ for some fixed $n_0\in\mathbb{N}$. Now, we write
\begin{align*}\int_{\mathbb{R}^n}\exp\left(\Lambda_{\mu}^{\ast}(x)/(16n)\right)d\mu(x)
&=1+\frac{1}{16}\int_0^{\infty}e^{\frac{t}{16}}\mu(\{x\in {\mathbb R}^n:\Lambda_{\mu}^{\ast}(x)>nt\})\,dt\\
&\ls 1+\frac{1}{16}\int_0^{t_n}e^{\frac{t}{16}}\,dt
+\frac{1}{16}\int_{t_n}^{\infty}e^{\frac{t}{16}}(1-\mu(B_{nt}(\mu)))\,dt\\
&\ls e^{\frac{t_n}{16}}+\frac{1}{16}\int_{t_n}^{\infty}e^{\frac{t}{16}}e^{-\frac{t}{8}}dt
=e^{\frac{t_n}{16}}+\frac{1}{16}\int_{t_n}^{\infty}e^{-\frac{t}{16}}dt \ls 2e^{\frac{t_n}{16}}.
\end{align*}
Therefore, we have the assertion of the theorem with $c=\frac{1}{16}$.\end{proof}

\begin{remark}\rm We can obtain an upper bound on the Orlicz norm
$$\|\Lambda_{\mu}^{\ast}\|_{L^{\psi_1}(\mu)}:=\inf\left\{r>0: \int_{\mathbb{R}^n}\exp\left(\Lambda_{\mu}^{\ast}(x)/r\right)d\mu(x)\ls 2\right\}$$
corresponding to $\psi_1(t)=e^t-1$, using H\"{o}lder's inequality. We write
$$\int_{\mathbb{R}^n}\exp\left(\Lambda_{\mu}^{\ast}(x)/r\right)d\mu(x)\ls \left(\int_{\mathbb{R}^n}\exp\left(\Lambda_{\mu}^{\ast}(x)/(16n)\right)d\mu(x)\right)^{\frac{16n}{r}}
\ls 2^{\frac{16n}{r}}e^{\frac{nt_n}{r}}$$
for any $r>16n$, and then choosing $r_0=\frac{32}{\ln 2}nt_n$ we check that $\|\Lambda_{\mu}^{\ast}\|_{L^{\psi_1}(\mu)}\ls r_0\approx n^2\ln n$.   
\end{remark}

For the proof of Theorem~\ref{th:small-moments} we use Lemma~\ref{lem:level-B} again, with a different
choice of $\delta$ depending on $t$. In fact we can estimate $\|\Lambda_{\mu}^{\ast}\|_{L^p(\mu)}$ for all $1\ls p\ls cn$.

\begin{theorem}\label{th:small-moments-p}For every centered log-concave probability measure $\mu$ on ${\mathbb R}^n$
and any $1\ls p\ls c_1n$ we have that
$$\|\Lambda_{\mu}^{\ast}\|_{L^p(\mu)}\ls c_2pn\ln n$$
where $c_1,c_2>0$ are absolute constants.
\end{theorem}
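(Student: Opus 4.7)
The plan is to follow the template of the proof of Theorem~\ref{th:small-moments} (the case $p=2$) and generalise it by choosing the parameter $\delta$ in Lemma~\ref{lem:level-B} to depend on $p$. As in that proof, I first reduce to the isotropic case via the push-forward identity~\eqref{eq:push-forward-2}, and then start from the layer cake formula
\begin{equation*}
\|\Lambda_\mu^{\ast}\|_{L^p(\mu)}^p = \int_0^\infty p s^{p-1}\bigl(1 - \mu(B_s(\mu))\bigr)\,ds,
\end{equation*}
so that the whole task reduces to a sufficiently fast decay estimate for $1 - \mu(B_s(\mu))$.

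To obtain such an estimate, I would apply Lemma~\ref{lem:level-B} with the choice $\delta = t^{-(p+2)}$, which gives
\begin{equation*}
(1-t^{-(p+2)}) R_t(\mu) \subseteq B_{g(t,\delta)}(\mu), \qquad g(t,\delta) = 2t + n(p+2)\ln t.
\end{equation*}
The crucial point is that under the hypothesis $p\ls c_1 n$ and for $t\gr C(p+2)n\ln n$ with an appropriate absolute constant $C$, one has $\ln t\ls C^{\prime}\ln n$, which forces $n(p+2)\ln t\ls t$, so that $g(t,\delta)\ls 3t$. Combining this inclusion with Lemma~\ref{lem:2} (to absorb the factor $(1-t^{-(p+2)})^{-1}$) and with Proposition~\ref{prop:r-2} (which gives $\mu(R_t(\mu))\gr 1-e^{-t/4}$) yields
\begin{equation*}
1 - \mu(B_{3t}(\mu))\ls e^{-t/4} + 4n\,t^{-(p+2)},
\end{equation*}
which after the change of variables $s=3t$ becomes a decay bound for $1 - \mu(B_s(\mu))$ valid for every $s\gr s_n := 3C(p+2)n\ln n$.

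Finally, I would split the tail integral at $s_n$: the piece on $[0,s_n]$ is trivially at most $s_n^p$, while the piece on $[s_n,\infty)$ is handled by integrating the two summands $e^{-s/12}$ and $n/s^{p+2}$ separately. The first contributes a term of order $12^p\,p!$; the second, thanks to $s_n^2 \gg np$, contributes something of lower order. Using the subadditivity $(a+b+c)^{1/p}\ls a^{1/p}+b^{1/p}+c^{1/p}$ (valid for $p\gr 1$ since $x\mapsto x^{1/p}$ is concave), the dominant contribution is $s_n\approx pn\ln n$, while the other two terms are $O(p)$ and $O(1)$; this gives the desired bound $\|\Lambda_\mu^{\ast}\|_{L^p(\mu)}\ls c_2 pn\ln n$.

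The main obstacle is the calibration of $\delta$. It must be small enough that the polynomial tail $4n\delta$ produces decay of order $s^{-(p+2)}$ for $1 - \mu(B_s(\mu))$, which is what makes the tail integral converge with an acceptable constant; at the same time it cannot be too small, since the penalty $n\ln(1/\delta)$ enters linearly in $g(t,\delta)$ and thereby controls the starting point $s_n$ of the decay region. The hypothesis $p\ls c_1 n$ is exactly what lets both demands be met with $s_n$ of the claimed order $pn\ln n$.
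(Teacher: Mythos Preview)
Your approach is essentially the paper's: reduce to the isotropic case, apply Lemma~\ref{lem:level-B} with $\delta=t^{-(p+2)}$, combine with Proposition~\ref{prop:r-2} and Lemma~\ref{lem:2}, and finish with a layer-cake computation. There is, however, one genuine slip in the justification. You assert that for all $t\gr C(p+2)n\ln n$ one has $\ln t\ls C'\ln n$; this is simply false, since $\ln t\to\infty$. What you actually need is the inequality $n(p+2)\ln t\ls t$ for all such $t$, and that \emph{does} hold---but by the monotonicity of $t\mapsto t/\ln t$ on $[e,\infty)$ (once the inequality holds at the threshold it persists for larger $t$), not for the reason you give. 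The paper invokes exactly this monotonicity argument.

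There is also a harmless bookkeeping difference worth noting. The paper keeps the threshold at $t_n=n\ln n$, independent of $p$, and lets the $p$-dependence sit in the scaling factor: from $n\ln t\ls 2t$ for $t\gr t_n$ it gets $g(t,t^{-(p+2)})\ls 2(p+3)t$, and then computes $\int|\Lambda_\mu^\ast/2(p+3)|^p\,d\mu\ls t_n^p+p/t_n$. You instead enlarge the threshold to $s_n\approx C(p+2)n\ln n$ in order to keep the scaling factor bounded by $3$. Both routes lead to $\|\Lambda_\mu^\ast\|_{L^p(\mu)}\ls c\,pn\ln n$, so once the monotonicity step is repaired your argument goes through.
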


\begin{proof}As in the previous proof, we may assume that $\mu$ is isotropic.
Let $t_n:=n\ln n$. Applying Proposition~\ref{prop:floating-1} and Lemma~\ref{lem:level-B} with
$\delta=t^{-p-2}$, or more precisely using \eqref{eq:delta}, we see that
\begin{equation}\label{eq:for-theorem-4.2-1}(1-t^{-p-2})R_t(\mu)\subseteq T_{g(t,t^{-p-2})}(\mu)\subseteq B_{g(t,t^{-p-2})}(\mu)\end{equation}
where 
\begin{equation}\label{eq:for-theorem-4.2-2}g(t,t^{-(p+2)})=t+\frac{n}{2}\ln(c_4n)+(p+2)n\ln t\ls 2(p+3)t\end{equation}
because the function $m(t)=\frac{t}{\ln t}$ is increasing on $[e,\infty)$ and hence
$\frac{n}{2}\ls \frac{n\ln n}{\ln(n\ln n)} \ls \frac{t}{\ln t}$ for all $t\gr t_n$, which
implies that $n\ln t\ls 2t$ for all $t\gr t_n$. From Lemma~\ref{lem:2} it follows that
$$\mu(B_{2(p+3)t})(\mu)\gr e^{-4n/t^{p+2}}\mu(R_t(\mu))\gr  e^{-4n/t^{p+2}}(1-e^{-t/4})\gr 1-\frac{8n}{t^{p+2}}>1-\frac{1}{t^{p+1}}$$
for all $t\gr t_n$ and all $0<p\ls n/8$ (here we use the fact that $\frac{t}{4}>(p+2)\ln t$ if $t\gr t_n$ and $p\ls n/8$,
which implies that $e^{-t/4}<2n/t^{p+2}$). Then, we write
\begin{align*}\int_{\mathbb{R}^n}|\Lambda_{\mu}^{\ast}(x)/2(p+3)|^pd\mu(x)
&=\int_0^{\infty}pt^{p-1}\mu(\{x\in {\mathbb R}^n:\Lambda_{\mu}^{\ast}(x)>2(p+3)t\})\,dt\\
&=\int_0^{\infty}pt^{p-1}(1-\mu(B_{2(p+3)t}(\mu)))\,dt\\
&\ls t_n^p+\int_{t_n}^{\infty}pt^{p-1}\,\frac{1}{t^{p+1}}\,dt
\ls t_n^p+\frac{p}{t_n}.
\end{align*}
It follows that $\|\Lambda_{\mu}^{\ast}\|_{L^p(\mu)}\ls cpt_n=cpn\ln n$, for some absolute constant $c>0$.
\end{proof}

\begin{remark}\label{rem:optimal}\rm The upper bound of Theorem~\ref{th:small-moments} is sharp: a computation in
\cite{BGP-threshold} shows that
$$\|\Lambda_{\mu_{D_n}}^{\ast}\|_1\approx \|\Lambda_{\mu_{D_n}}^{\ast}\|_2\approx n\ln n$$
where $D_n$ is the centered Euclidean ball of volume $1$ in $\mathbb{R}^n$ and $\mu_{D_n}$ is the
uniform measure on $D_n$. On the other hand, it was proved in \cite{BGP-depth} that if $\mu$ is a log-concave
probability measure on ${\mathbb R}^n$, $n\gr n_0$, then
$$\int_{\mathbb{R}^n}e^{-\Lambda_{\mu }^{\ast}(x)}\,d\mu(x) \ls \exp\left(-cn/L_{\mu }^2\right).$$
From Jensen's inequality we immediately get
$$\|\Lambda_{\mu}^{\ast}\|_1=\int_{\mathbb{R}^n}\Lambda_{\mu }^{\ast}(x)\,d\mu(x) \gr cn/L_{\mu }^2\gr c_1n.$$
This lower bound is also optimal, as one can check from the example of the uniform measure on the cube $C_n=\left[-\frac{1}{2},\frac{1}{2}\right]^n$. Summarizing, for any centered log-concave probability measure
$\mu$ on $\mathbb{R}^n$ we have that
$$c_1n\ls \|\Lambda_{\mu}^{\ast}\|_{L^1(\mu)}\ls\|\Lambda_{\mu}^{\ast}\|_{L^2(\mu)}\ls c_2n\ln n,$$
where $c_1,c_2>0$ are absolute constants.
\end{remark}

\section{Applications and further remarks}\label{section-4}

In this section we provide a number of applications of our approach. In particular, Theorem~\ref{th:non-sharp}
establishes an asymptotically best possible uniform lower threshold for the expect measure of random polytopes
with vertices that have a log-concave distribution.

\subsection{Uniform thresholds for the measure of random polytopes}\label{subsection-4.1}

In this subsection we prove Theorem~\ref{th:rough}. We start with a short overview of related results that should be
compared with our new bound. Uniform upper and lower thresholds were established by Chakraborti, Tkocz and Vritsiou in \cite{Chakraborti-Tkocz-Vritsiou-2021} in the case where $\mu $ is an even log-concave or $\kappa$-concave probability
measure supported on a convex body $K$ in ${\mathbb R}^n$. Assuming that $\mu$ is log-concave and $X_1,X_2,\ldots $ are independent random points distributed according to $\mu $, for any $n<N\ls \exp (c_1n/L_{\mu }^2)$ we have that
\begin{equation}\label{eq:rough-1}{\mathbb E}_{\mu^N}\big(|K_N|/|K|\big) \ls \exp\left(-c_2n/L_{\mu }^2\right),\end{equation}
where $K_N={\rm conv}\{X_1,\ldots ,X_N\}$ and $c_1,c_2>0$ are absolute constants. In the same work it is
shown that if $\mu $ is assumed $\kappa $-concave then for any $M\gr C$ and any $N\gr \exp\left(\frac{1}{\kappa }(\ln n+2\ln M)\right)$ we have that
\begin{equation}\label{eq:rough-2}{\mathbb E}_{\mu^N}\big(|K_N|/|K|\big)\gr 1-1/M,\end{equation}
where $C>0$ is an absolute constant. Afterwards, the same question was studied in \cite{BGP-depth} for $0$-concave, i.e. log-concave, probability measures. The upper threshold in \cite{BGP-depth} states that there exists an absolute constant $c>0$ such that if $N_1(n)=\exp (cn/L_n^2)$ then
\begin{equation}\label{eq:rough-3}\sup_{\mu }\Big(\sup\Big\{{\mathbb E}_{\mu^N}[\mu (K_N)]:N\ls N_1(n)\Big\}\Big)\longrightarrow 0\end{equation}
as $n\to\infty $, where the first supremum is over all log-concave probability measures $\mu $ on ${\mathbb R}^n$.
Regarding the lower threshold, it was first proved in \cite{BGP-depth} that, for any $\delta\in (0,1)$,
\begin{equation}\label{eq:rough-4}\inf_{\mu }\Big(\inf\Big\{ {\mathbb E}_{\mu^N}\big[\mu ((1+\delta )K_N)\big]: N\gr \exp \big (C\delta^{-1}\ln \left(2/\delta \right)n\ln n\big )\Big\}\Big)\longrightarrow 1\end{equation} as $n\to\infty $, where the first infimum is over all log-concave probability measures $\mu $ on ${\mathbb R}^n$ and $C>0$ is an absolute constant.
Using Lemma~\ref{lem:2}, from \eqref{eq:rough-4} one can deduce that
\begin{equation}\label{eq:rough-5}\inf_{\mu }\Big(\inf\Big\{ {\mathbb E}\,\big[\mu (K_N)\big]: N\gr \exp (C(n\ln n)^2u(n))\Big\}\Big)\longrightarrow 1\end{equation}
as $n\to\infty $, where $C>0$ is an absolute constant, the first infimum is over all log-concave probability measures
$\mu $ on ${\mathbb R}^n$ and $u(n)$ is any function with $u(n)\to\infty $ as $n\to\infty $.

The proof of \eqref{eq:rough-4} is based on Proposition~\ref{prop:3-weak}. We
can obtain a variant of this fact, with a different proof and a slightly better dependence on $\delta$.

\begin{proposition}\label{prop:3}Let $\mu $ be a centered log-concave probability measure on ${\mathbb R}^n$.
For any $\delta\in \left(\frac{3}{n},1\right)$
and any $t\gr \frac{c_1}{\delta}n\ln n$ we have that
$$\mu ((1+\delta )Z_t^+(\mu ))\gr 1-e^{-c_2\delta t}$$
where $c_1,c_2>0$ are absolute positive constants.
\end{proposition}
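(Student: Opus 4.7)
The plan is to combine a direction-wise Markov inequality with a covering argument carried out in the norm of the body $Z_t^+(\mu)$ itself. The key point that allows improvement over Proposition~\ref{prop:3-weak} is exactly this: running the net in the gauge of the body (rather than a Euclidean net on $S^{n-1}$) produces a net whose cardinality does not depend on $t$, which is what kills the extra $\ln(2/\delta)$ factor in the threshold. We may assume throughout that $\mu$ is isotropic, since $Z_t^+(T_{\ast}\mu)=T(Z_t^+(\mu))$ and the statement is affine invariant.

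The starting point is a one-directional bound. For every $\xi\in M_t^+(\mu)=(Z_t^+(\mu))^{\circ }$, Markov's inequality applied to $\langle x,\xi\rangle_+^t$ gives
\[
\mu\bigl(\{x\in {\mathbb R}^n:\langle x,\xi\rangle>1+\delta/2\}\bigr)\ls\frac{\int_{{\mathbb R}^n}\langle x,\xi\rangle_+^t\,d\mu(x)}{(1+\delta/2)^t}\ls (1+\delta/2)^{-t}\ls e^{-\delta t/4},
\]
where the second inequality uses $h_{Z_t^+(\mu)}(\xi)\ls 1$ for $\xi\in (Z_t^+(\mu))^{\circ }$ and the third uses $\ln(1+\delta/2)\gr \delta/4$ for $\delta\in (0,1)$.

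Next, I would fix $\eta=\delta/3$ and pick an $\eta$-covering $N\subseteq (Z_t^+(\mu))^{\circ }$ in its own gauge, so that $|N|\ls (1+2/\eta)^n\ls (9/\delta)^n$ by the standard volumetric argument; crucially, this bound is independent of $t$. The duality $\|x\|_{Z_t^+(\mu)}=\sup_{\xi\in (Z_t^+(\mu))^{\circ }}\langle x,\xi\rangle$ together with the estimate $\langle x,\xi-\xi_0\rangle\ls \eta\|x\|_{Z_t^+(\mu)}$ for $\xi-\xi_0\in \eta(Z_t^+(\mu))^{\circ }$ yields the successive-approximation inequality $\|x\|_{Z_t^+(\mu)}\ls (1-\eta)^{-1}\sup_{\xi_0\in N}\langle x,\xi_0\rangle$. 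In particular, if $x\notin (1+\delta)Z_t^+(\mu)$ then $\langle x,\xi_0\rangle > 1+\delta/2$ for some $\xi_0\in N$, and a union bound combined with the direction-wise estimate gives
\[
\mu\bigl({\mathbb R}^n\setminus (1+\delta)Z_t^+(\mu)\bigr)\ls |N|\cdot e^{-\delta t/4}\ls (9/\delta)^n e^{-\delta t/4}.
\]

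Finally, under the hypotheses $\delta\gr 3/n$ and $t\gr c_1\delta^{-1}n\ln n$ we have $n\ln(9/\delta)\ls n\ln(3n)\ls C n\ln n\ls (C/c_1)\delta t$, so for $c_1$ large enough this factor is absorbed into a small fraction of $\delta t/4$, giving the desired bound $\mu({\mathbb R}^n\setminus (1+\delta)Z_t^+(\mu))\ls e^{-c_2\delta t}$ for an absolute constant $c_2>0$. The main technical subtlety is the covering estimate for the (possibly non-symmetric) polar body $(Z_t^+(\mu))^{\circ }$: either one replaces it in the net construction by the symmetric $Z_t(\mu)^{\circ }$ — using that for $\mu$ centered log-concave the support functions of $Z_t^+(\mu)$ and $Z_t(\mu)$ are comparable up to absolute constants, via the identity $h_{Z_t}(\xi)^t=h_{Z_t^+}(\xi)^t+h_{Z_t^+}(-\xi)^t$ and a Fradelizi-type bound on the ratio $h_{Z_t^+}(\xi)/h_{Z_t^+}(-\xi)$ — or one verifies the volumetric net bound directly for non-symmetric convex bodies via the identity $K+\eta K=(1+\eta)K$, valid for any convex $K$ containing the origin.
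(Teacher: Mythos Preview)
Your approach --- Markov's inequality in each direction, then a union bound over a net in the polar body --- is quite different from the paper's, which avoids nets entirely: it chains the inclusions $R_s(\mu)\subseteq\frac{1}{1-\eta}B_{3s}(\mu)\subseteq\frac{1+\eta}{1-\eta}Z^+_{3cs/\eta}(\mu)$ coming from Lemma~\ref{lem:level-B} and Proposition~\ref{prop:B<Z}, and then simply invokes $\mu(R_s(\mu))\gr 1-e^{-s/4}$ from Proposition~\ref{prop:r-2}.

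There is, however, a genuine gap in your covering step, and both of your proposed fixes for the non-symmetry of $M_t^+(\mu)$ fail as written. For route~(1), the ratio $h_{Z_t^+(\mu)}(\xi)/h_{Z_t^+(\mu)}(-\xi)$ is \emph{not} bounded by an absolute constant: if $\mu$ is the centered exponential on $\mathbb{R}$ (density $e^{-(x+1)}\mathds{1}_{[-1,\infty)}$), then $\|X_+\|_t^t=e^{-1}\Gamma(t+1)$ while $\|X_-\|_t^t\ls 1/(t+1)$, so the ratio is of order $t$. For route~(2), the identity $K+\eta K=(1+\eta)K$ does give a packing bound $|N|\ls(1+1/\eta)^n$ for a maximal $N\subseteq K$ with the sets $\xi_i+\eta K$ pairwise disjoint, but maximality of such a packing only yields $\xi-\xi_0\in\eta(K-K)$, not $\xi-\xi_0\in\eta K$; you are then forced to control $h_{K-K}(x)=\|x\|_{Z_t^+}+\|{-x}\|_{Z_t^+}$, which is exactly route~(1) again. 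The argument \emph{can} be salvaged: net $M_t^+(\mu)$ in the symmetric gauge of $(Z_t(\mu))^{\circ}$ while keeping the centers in $M_t^+(\mu)$; using $\|X_-\|_t\ls\|X\|_t\ls Ct\|X\|_1=2Ct\|X_+\|_1\ls 2Ct\|X_+\|_t$ one gets $M_t^+(\mu)\subseteq Ct\,(Z_t(\mu))^{\circ}$ and hence a net of size $(C't/\delta)^n$. The resulting extra $n\ln t$ in the exponent is absorbed by the hypotheses --- at the threshold $t=c_1\delta^{-1}n\ln n$ with $\delta>3/n$ one has $n\ln t\ls C''n\ln n\ls(C''/c_1)\delta t$, and the inequality only improves for larger $t$. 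So your route ultimately works, but with noticeably more effort than indicated; the paper's inclusion-chaining proof sidesteps the whole issue.
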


\begin{proof}Proposition~\ref{prop:B<Z} shows that for any $m\gr 1$ and any $\eta\in (0,1)$ we have that
\begin{equation*}B_m(\mu) \subseteq (1+\eta)Z_{cm/\eta}^+(\mu)\end{equation*}
where $c>0$ is an absolute constant. Now, from Lemma~\ref{lem:level-B} and Proposition~\ref{prop:floating-1} we know that if $s\gr n\ln n$ then
$$(1-\eta)R_s(\mu)\subseteq B_{g(s,\eta)}$$
where $g(s,\eta)\ls 2s+n\ln(1/\eta)$. Assuming that $\frac{1}{n}<\eta<\frac{1}{3}$ we see that
$g(s,\eta)\ls 3s$ and hence
$$R_s(\mu)\subseteq\frac{1}{1-\eta}B_{3s}\subseteq \frac{1+\eta}{1-\eta}Z_{3cs/\eta}^+(\mu)
\subseteq (1+3\eta)Z_{3cs/\eta}^+(\mu)$$
for every $s\gr n\ln n$, because $\eta<\frac{1}{3}$ implies that $\frac{1+\eta}{1-\eta}\ls 1+3\eta$.
Since $s\gr 5n$, we may also apply Proposition~\ref{prop:r-2} to get $\mu(R_s(\mu ))\gr 1-e^{-s/4}$.
It follows that
$$\mu((1+3\eta)Z_{3cs/\eta}^+(\mu))\gr 1-e^{-s/4}$$
for every $s\gr n\ln n$ and any $\eta\in (1/n,1/3)$. Setting $\delta=3\eta$ and $t=3cs/\eta=9cs/\delta$
we get the assertion of the proposition. \end{proof}

Having established Proposition~\ref{prop:3} and following the proofs of Theorem~5.5 and Theorem~5.8 from
\cite{BGP-depth} we can check that
\begin{equation}\label{eq:rough-6}\inf_{\mu }\Big(\inf\Big\{ {\mathbb E}_{\mu^N}\big[\mu ((1+\delta )K_N)\big]: N\gr \exp \big (C\delta^{-1}n\ln n\big )\Big\}\Big)\longrightarrow 1\end{equation}
and then
\begin{equation}\label{eq:rough-7}\inf_{\mu }\Big(\inf\Big\{ {\mathbb E}\,\big[\mu (K_N)\big]: N\gr \exp (Cn^2(\ln n)u(n))\Big\}\Big)\longrightarrow 1\end{equation}
as $n\to\infty$. However, using directly the family $\{T_t(\mu)\}_{t>0}$ of floating bodies of $\mu$ instead of
the family $\{Z_t^+(\mu)\}_{t>0}$ of centroid bodies of $\mu$, we can give an alternative
proof of the uniform lower threshold with an optimal dependence on the dimension.

\begin{theorem}\label{th:non-sharp}There exists an absolute constant $C>0$ such that
$$\inf_{\mu }\Big(\inf\Big\{ {\mathbb E}_{\mu^N}\big[\mu (K_N)\big]: N\gr \exp (Cn\ln n)\Big\}\Big)\longrightarrow 1$$
as $n\to\infty $, where the first infimum is over all log-concave probability measures $\mu $ on ${\mathbb R}^n$.
\end{theorem}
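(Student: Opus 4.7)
The plan is to use the floating bodies $\{T_s(\mu)\}_{s>0}$ as the bridge between the geometric information (Tukey depth) and the probabilistic event $\{x\in K_N\}$, and to pick the level $s$ to be of order $n\ln n$. After reducing to the isotropic case (so that affine invariance of $\mu(K_N)$ allows us to work with bounded one-dimensional marginals), the proof splits into a volumetric part (showing $T_s(\mu)$ has almost full $\mu$-measure) and a probabilistic part (showing every point of $T_s(\mu)$ is covered by $K_N$ with high probability).

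For the volumetric part, I would argue as in Theorem~\ref{th:small-moments-p}. Applying Lemma~\ref{lem:level-B} with $t=n\ln n$ and $\delta=t^{-3}$, together with Proposition~\ref{prop:r-2} and Lemma~\ref{lem:2}, gives $\mu(B_{8n\ln n}(\mu))\gr 1-(n\ln n)^{-2}$ for $n$ large enough. Proposition~\ref{prop:floating-1} then yields $B_{8n\ln n}(\mu)\subseteq T_{s_0}(\mu)$ with $s_0:=8n\ln n+3\ln(8n\ln n)\ls 9n\ln n$, so
\begin{equation*}
\mu(T_{s_0}(\mu))\gr 1-(n\ln n)^{-2}\longrightarrow 1.
\end{equation*}

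For the probabilistic part, I fix $x\in T_{s_0}(\mu)$ and bound $\pp(x\notin K_N)$ via a net on $S^{n-1}$. By Paouris' deviation inequality, on an event of probability $1-o(1)$ every $X_i$ lies in $RB_2^n$ with $R$ polynomial in $n$ and $\ln N$. Let $\mathcal{N}$ be a $\delta$-net of $S^{n-1}$ with $|\mathcal{N}|\ls (3/\delta)^n$. If, for every $\xi'\in\mathcal{N}$, some $X_i$ satisfies $\langle X_i-x,\xi'\rangle\gr R\delta$, then by the Lipschitz argument $\langle X_i-x,\xi\rangle\gr 0$ for the closest net point to any $\xi\in S^{n-1}$, and hence $x\in K_N$. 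Writing $H_{\xi'}^\alpha=\{z:\langle z-x,\xi'\rangle\gr\alpha\}$, the definition of Tukey depth gives $\mu(H_{\xi'}^0)\gr e^{-s_0}$, while log-concavity of the marginal $\langle Z,\xi'\rangle$ (whose density is bounded by an absolute constant in the isotropic case) forces
\begin{equation*}
\mu(H_{\xi'}^{R\delta})\gr e^{-s_0}-c_0R\delta\gr e^{-s_0}/2
\end{equation*}
once we choose $\delta=e^{-s_0}/(2c_0R)$. A union bound then gives
\begin{equation*}
\pp(x\notin K_N)\ls (3/\delta)^n e^{-Ne^{-s_0}/2}+o(1)\ls \exp\bigl(n\ln(6c_0R)+ns_0-Ne^{-s_0}/2\bigr)+o(1),
\end{equation*}
which tends to $0$ uniformly in $x\in T_{s_0}(\mu)$ provided $N\gr\exp(Cn\ln n)$ for an absolute constant $C>9$. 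Integrating the inequality $\pp(x\in K_N)\gr 1-o(1)$ over $T_{s_0}(\mu)$ finishes the proof:
\begin{equation*}
\mathbb{E}_{\mu^N}[\mu(K_N)]\gr \int_{T_{s_0}(\mu)}\pp(x\in K_N)\,d\mu(x)\gr\mu(T_{s_0}(\mu))(1-o(1))\longrightarrow 1.
\end{equation*}

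The main obstacle is the passage from a discrete net on $S^{n-1}$ to a uniform statement over all directions in the probabilistic step: a direction $\xi$ that is not separated by any $X_i$ need not correspond to a direction $\xi'\in\mathcal{N}$ with the same property, so one must work with slightly shifted half-spaces $H_{\xi'}^{R\delta}$ and show that their $\mu$-measure has not degraded below a constant multiple of $e^{-s_0}$. It is precisely here that the log-concavity of $\mu$ enters (through boundedness of the one-dimensional marginal densities) to make the loss from shifting negligible, while the choice $s_0\asymp n\ln n$ is what keeps the net cardinality $e^{ns_0}$ compatible with the threshold $N\gr\exp(Cn\ln n)$.
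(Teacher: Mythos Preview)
Your volumetric half (Lemma~\ref{lem:level-B}, Proposition~\ref{prop:r-2}, Lemma~\ref{lem:2}, then Proposition~\ref{prop:floating-1}) coincides with the paper's. The probabilistic half is genuinely different. The paper bypasses the net-plus-Paouris machinery entirely by invoking the classical combinatorial lemma (see \cite{DFM}, \cite{Gatzouras-Giannopoulos-2009}, \cite{Chakraborti-Tkocz-Vritsiou-2021})
\[
1-\mu^N\bigl(K_N\supseteq A\bigr)\ls 2\binom{N}{n}\Bigl(1-\inf_{x\in A}\varphi_{\mu}(x)\Bigr)^{N-n},
\]
applied directly with $A=T_{s_0}(\mu)$, where $\inf_{x\in A}\varphi_{\mu}(x)\gr e^{-s_0}$ by the very definition of the floating body. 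This lemma is distribution-free, needs neither Paouris' inequality nor bounded marginals, and yields the stronger containment $K_N\supseteq T_{s_0}(\mu)$ with high probability rather than only a pointwise bound; the factor $\binom{N}{n}\ls (eN/n)^n$ is absorbed once $N\gr\exp(Cn\ln n)$. Your net argument is correct and reaches the same threshold, but at the cost of the extra ingredients. One detail to make explicit: the Lipschitz step must control $|X_i-x|$, not $|X_i|$, so you also need a uniform bound $|x|\ls Cn\ln n$ for $x\in T_{s_0}(\mu)$ (available via $T_{s_0}(\mu)\subseteq B_{s_0}(\mu)$, Proposition~\ref{prop:B<Z}, and $Z_t^+(\mu)\subseteq ctB_2^n$ in the isotropic position); this only affects constants.
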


\begin{proof}Let $\mu$ be a log-concave probability measure on $\mathbb{R}^n$. Since the expectation
${\mathbb E}_{\mu^N}\big[\mu (K_N)\big]$ is a affinely invariant quantity, we may assume that $\mu$ is isotropic.
Let $t_n:=n\ln n$. In the proof of Theorem~\ref{th:small-moments} (see \eqref{eq:for-theorem-4.2-1} and \eqref{eq:for-theorem-4.2-2}) we saw that
$$\mu(T_{10t}(\mu))>1-1/t^3$$
for all $t\gr t_n$. By the definition of the family $\{T_t(\mu)\}_{t>0}$, for any $x\in T_{10t}(\mu)$ we have
\begin{equation*}\varphi_{\mu }(x)\gr e^{-10t}.\end{equation*}
We use the following standard lemma (which is stated in this form in \cite[Lemma~3]{Chakraborti-Tkocz-Vritsiou-2021};
for a proof see \cite{DFM} or \cite[Lemma~4.1]{Gatzouras-Giannopoulos-2009}): For every Borel subset $A$ of ${\mathbb R}^n$ we have that
$$1-\mu^N(K_N\supseteq A)\ls 2\binom{N}{n}\left (1-\inf_{x\in A}\varphi_{\mu }(x)\right)^{N-n}.$$
Therefore,
$${\mathbb E}_{\mu^N}[\mu (K_N)]\gr \mu (A)\left (1-2\binom{N}{n}\left (1-\inf_{x\in A}\varphi_{\mu }(x)\right)^{N-n}\right).$$
Setting $A=T_{10t}(\mu)$ we get
\begin{align*}\mu^N\Big(K_N\supseteq T_{10t}(\mu)\Big)
&\gr 1-2\binom{N}{n}\left [1-e^{-10t}\right]^{N-n}\\
&\gr 1-\left(\frac{2eN}{n}\right)^n\exp \left(-(N-n)e^{-10t}\right).
\end{align*}
This last quantity tends to $1$ as $n\to\infty $ if
\begin{equation}\label{eq:conditionC1}n\ln (4eN/n)<(N-n)e^{-10t},\end{equation}
and we easily check that \eqref{eq:conditionC1} holds true if $t\gr t_n$ and $N\gr \exp (Ct)$ for a large enough absolute constant $C>0$. 
Therefore, if $N\gr \exp (Cn\ln n)$ we see that
\begin{align*}{\mathbb E}_{\mu^N}\left[\mu (K_N)\right]
&\gr \mu (T_{10t_n}(\mu))\times \mu^N\Big(K_N\supseteq T_{10t_n}(\mu)\Big)\\
&\gr \big(1-t_n^{-3}\big)\left[1-\left(\frac{2eN}{n}\right)^n\exp \left(-(N-n)e^{-10t_n}\right)\right]\longrightarrow 1
\end{align*}
as $n\to\infty $.
\end{proof}

\subsection{Distribution of the half-space depth}\label{subsection-4.2}

It was proved in \cite{BGP-depth} that if $\mu$ is a log-concave probability measure on ${\mathbb R}^n$, $n\gr n_0$, then
$$\exp(-c_1n)\ls {\mathbb E}_{\mu }(\varphi_{\mu }) \ls \exp\left(-c_2n/L_{\mu}^2\right)\ls\exp(-c_3n)$$
where $L_{\mu }$ is the isotropic constant of $\mu $ and $c_i>0$, $n_0\in {\mathbb N}$ are absolute constants.
In this subsection we discuss the question to determine the values of $p>0$ for which
${\mathbb E}_{\mu }(\varphi_{\mu }^{-p})$ is finite. Brazitikos and Chasapis have shown in \cite[Proposition~3.2]{Brazitikos-Chasapis-2024}
that in the $1$-dimensional case one has ${\mathbb E}(\varphi_{\mu}^{-p})\ls 2^p/(1-p)<\infty $
for all $0<p<1$ and any probability measure $\mu$ on $\mathbb{R}$.

A simple computation with the standard Gaussian measure $\gamma_n$ on $\mathbb{R}^n$ shows that some restriction
on $p$ cannot be avoided. Using the rotational invariance of $\gamma_n$ we easily check that
$$\varphi_{\gamma_n}(x)=1-\Phi(|x|)=\frac{1}{\sqrt{2\pi}}\int_{|x|}^{\infty}e^{-t^2/2}dt$$
where $|x|$ denotes Euclidean norm. This implies that $\varphi_{\gamma_n}(x)\ls \frac{1}{\sqrt{2\pi}|x|}e^{-|x|^2/2}$,
and hence
$$J_{\gamma_n}(p):=\int_{{\mathbb R}^n}\frac{1}{\varphi_{\gamma_n}^p(x)}\,d\gamma_n(x)\gr (2\pi)^{p/2}\int_{{\mathbb R}^n}|x|^pe^{-(1-p)|x|^2/2}\,dx.$$
It follows that $J_{\gamma_n}(p)<\infty$ for all $0<p<1$ but $J_{\gamma_n}(1)=\infty$.

Our results allow us to show that there exists an absolute constant $c>0$ such that
$$J_{\mu}(p):=\int_{{\mathbb R}^n}\frac{1}{\varphi_{\mu}^p(x)}\,d\mu (x)<\infty$$
for any $0<p\ls c/n$ and every log-concave probability measure $\mu$ on $\mathbb{R}^n$.

\begin{proof}[Proof of Theorem~$\ref{th:negative-phi}$]We may assume that $\mu$ is centered.
The theorem follows immediately if we combine Theorem~\ref{th:moments}
with the inequality
$$\Lambda_{\mu}^{\ast}(x)\gr \ln\left(\frac{\varepsilon}{(2\varphi_{\mu}(x))^{1-\varepsilon}}\right).$$
for every $x\in {\rm supp}(\mu)$ and any $\varepsilon\in (0,1)$ (this is a result of Brazitikos and Chasapis from \cite{Brazitikos-Chasapis-2024} that we have already used in the proof of Proposition~\ref{prop:floating-1}).
Choosing $\varepsilon=1/2$ we get
$$\frac{1}{2^{3/2}\varphi_{\mu}(x)^{1/2}}\ls e^{\Lambda_{\mu}^{\ast}(x)}$$
and hence
$$J_{\mu}(p):=\int_{{\mathbb R}^n}\frac{1}{\varphi_{\mu}^p(x)}\,d\mu (x)\ls
2^{3p}{\mathbb E}\,\big[e^{2p\Lambda_{\mu}^{\ast}(x)}\big]<\infty$$
if $2p\ls c/n$ where $c>0$ is the absolute constant from Theorem~\ref{th:moments}.
\end{proof}

\subsection{Affine surface area}\label{subsection-4.3}

We close this article with some remarks on the connection of the integrability properties of $\Lambda_{\mu}^{\ast}$
with the notion of affine surface area. Let us first consider a convex body $K$ in ${\mathbb R}^n$. The affine surface area
of $K$ is defined by
$${\rm as}(K)=\int_{\partial(K)}\kappa(x)^{\frac{1}{n+1}}d\mu_{\partial(K)}(x),$$
where $\kappa(x)$ is the generalized Gauss-Kronecker curvature at $x$ and $\mu_{\partial(K)}$ is the surface measure on
$\partial(K)$ (see \cite{Nagy-Schutt-Werner-2019} and the references therein). The affine isoperimetric inequality states that
$$\left(\frac{{\rm as}(K)}{{\rm as}(B_2^n)}\right)^{n+1}\ls \left(\frac{|K|}{|B_2^n|}\right)^{n-1}$$
with equality if and only if $K$ is an ellipsoid (see \cite[Section~10.5]{Schneider-book}).
Using the fact that ${\rm as}(B_2^n)=n|B_2^n|$ we see that if $|K|=1$ then ${\rm as}(K)\ls c_1$, where
$c_1>0$ is an absolute constant. It is not hard to check that, for every $\delta\in (0,1/2)$, the floating body
$$K_{\delta}=\bigcap\{H^+:H^+\;\hbox{is a closed half-space with}\;|K\cap H^-|=\delta\},$$
where $H^-$ is the complementary half-space of $H^+$ satisfies
$$K_{\delta}=\{x\in\mathbb{R}^n:\varphi_{\mu_K}(x)\gr\delta\}=T_{\ln(1/\delta)}(\mu_K).$$
Sch\"{u}tt and Werner proved in \cite{Schutt-Werner-1990} that for every convex body $K$ in ${\mathbb R}^n$ one has that
$$\lim_{\delta\to 0}\frac{|K|-|K_{\delta}|}{\delta^{\frac{2}{n+1}}}=\frac{1}{2}\left(\frac{n+1}{\omega_{n-1}}\right)^{\frac{2}{n+1}}
{\rm as}(K).$$
In particular, if $|K|=1$ then there exists $\delta_0>0$ such that if $0<\delta <\delta_0$ then
$$1-|K_{\delta}|\ls c_2n\delta^{\frac{2}{n+1}}$$
or, equivalently, there exists $s_0>0$ such that
\begin{equation}\label{eq:T-measure}\sup\{e^{\frac{2s}{n+1}}(1-\mu_K(T_s(\mu_K))):s\gr s_0\}\ls c_2n.\end{equation}
Taking also into account the fact that $T_s(\mu_K)\subseteq B_s(\mu_K)$
for every $s>0$, applying \eqref{eq:T-measure} one can give an alternative proof of the fact that
$$\int_K\exp(\kappa\Lambda_{\mu_K}^{\ast}(x))\,dx <\infty$$
for all $\kappa<\frac{2}{n+1}$. The details appear in \cite[Theorem~6.5]{Giannopoulos-2025}. Theorem~\ref{th:T-measure} is an analogue
of \eqref{eq:T-measure} in the more general setting of log-concave probability measures.

\begin{proof}[Proof of Theorem~$\ref{th:T-measure}$]Let $\mu$ be a log-concave probability measure on $\mathbb{R}^n$.
We may also assume that $\mu$ is centered. In the proof of Theorem~\ref{th:moments} we saw that if $t\gr n\ln n$ then
$R_t(\mu)\subseteq (1+2e^{-t/2})T_{nt}(\mu)$, which implies that $\mu(T_{nt}(\mu))\gr 1-e^{-t/8}$. Equivalently, if $s\gr n^2\ln n$ then
$$\mu(T_s(\mu))\gr  1-e^{-s/(8n)},$$
which shows that $e^{s/(8n)}(1-\mu(T_s(\mu)))\ls 1$. It follows that
$$\sup\{e^{s/(8n)}(1-\mu(T_s(\mu))):s>0\}\ls \exp(n^2\ln n/(8n))$$
and the theorem follows with $c=1/8$ and $c_n=\exp(n\ln n/8)$.
\end{proof}

\bigskip

\noindent {\bf Acknowledgements.} We thank the referee for helpful comments and constructive suggestions that improved the presentation of the paper. 
The first named author acknowledges support by the Hellenic Foundation for
Research and Innovation (H.F.R.I.) in the framework of the call ``Basic research Financing (Horizontal support of all Sciences)” 
under the National Recovery and Resilience Plan ``Greece 2.0” funded by the European Union –NextGenerationEU (H.F.R.I. Project Number: 15445).
The second named author acknowledges support by a PhD scholarship from the National Technical University of Athens.

\bigskip


\footnotesize
\bibliographystyle{amsplain}


\bigskip

\medskip

\medskip

\thanks{\noindent {\bf Keywords:} log-concave probability measures, Cram\'{e}r transform, $L_q$-centroid bodies,
half-space depth, threshold phenomena, random polytopes.}

\smallskip

\thanks{\noindent {\bf 2020 MSC:} Primary 60D05; Secondary 52A23, 52A22, 60E15, 62H05.}

\bigskip

\bigskip

\noindent \textsc{Apostolos \ Giannopoulos}: School of Applied Mathematical and Physical
Sciences, National Technical University of Athens, Department of Mathematics, Zografou Campus, GR-157 80, Athens, Greece.

\smallskip

\noindent \textit{E-mail:} \texttt{apgiannop@math.ntua.gr}

\bigskip

\noindent \textsc{Natalia \ Tziotziou}: School of Applied Mathematical and Physical Sciences, National Technical University of Athens, Department of Mathematics, Zografou Campus, GR-157 80, Athens, Greece.

\smallskip

\noindent \textit{E-mail:} \texttt{nataliatz99@gmail.com}

\end{document}